\documentclass[12pt]{amsart}
\usepackage{graphicx}
\usepackage{leqno,color,psfrag,epsfig,url,hhline
}
\usepackage[dvipsnames]{xcolor}
\textwidth=160mm
\textheight=45\baselineskip
\oddsidemargin=2mm
\evensidemargin=2mm
\topmargin=-0mm

\numberwithin{figure}{section}
\numberwithin{table}{section}
\newtheorem{theorem}{Theorem}[section]
\newtheorem*{thrm}{Main Theorem}
\newtheorem{lemma}[theorem]{Lemma}
\newtheorem{prop}[theorem]{Proposition}

\theoremstyle{definition}
\newtheorem{definition}[theorem]{Definition}
\newtheorem{example}[theorem]{Example}

\newtheorem{cor}[theorem]{Corollary}

\theoremstyle{remark}
\newtheorem{remark}[theorem]{Remark}

\numberwithin{equation}{section}
\newfont{\tap}{tap scaled 650}

\def \H{{\mathbb H}}

\def \R{{\mathbb R}}

\def \Z{{\mathbb Z}}

\def \[{[ }
\def \]{] }

\def \val{val \ }

\def \val{\mathrm{val\,}}

\def \a{\alpha}
\def \b{\beta}
\definecolor{dgreen}{rgb}{0,0.5,0}        
\definecolor{dred}{rgb}{0.5,0,0}

\begin{document}

\title{Friezes from surfaces and Farey triangulation}
\author{Anna Felikson,  Pavel Tumarkin}
\address{Department of Mathematical Sciences, Durham University, Mathematical Sciences \& Computer Science Building, Upper Mountjoy Campus, Stockton Road, Durham, DH1 3LE, UK}
\email{anna.felikson@durham.ac.uk, pavel.tumarkin@durham.ac.uk}

\begin{abstract}
  We provide a classification of positive integral friezes on marked bordered surfaces in the style of Conway and Coxeter. More precisely, we prove that  positive integral friezes are in one-to-one correspondence with ideal triangulations supplied with a collection of rescaling constants assigned to punctures. For every triangulation the set of the collections of constants is finite and is completely determined by the valencies of vertices in the triangulation. In particular, it follows that the number of non-equivalent friezes on bordered surfaces is finite, and all friezes on unpunctured surfaces are unitary. The proofs are based on Penner's decorated hyperbolic structure defined by a frieze on the surface.   
  
  \end{abstract}

 \maketitle
{\small
\setcounter{tocdepth}{1}
\tableofcontents
}

\section{Introduction}
\label{intro}

Frieze patterns (or {\em friezes} for short)  were introduced by Coxeter~\cite{C} as tables of integers of finite width satisfying the ``diamond rule''.  More precisely, a frieze of width $n$ consists of $n+2$ rows of positive integers, where the first and the last rows consist of ones, even rows are shifted with respect to the odd ones, and for every ``diamond'' of the form
{\small
  $$
  \begin{tabular}{ccccccccccccccccccccc}
&$b$\\
    $a$&&$d$\\
    &$c$\\
  \end{tabular}
  $$
}
the unimodular relation $ad-bc=1$ holds, see an example for $n=4$ below:\\

\begin{center}
{\small
\begin{tabular}{ccccccccccccccccccccc}
{\!\!$\cdots$\!\!}&&1&&1&&1&&1&&1&&1&&1&&1&&{\!\!$\cdots$\!\!}&\\
&{\!\!$\cdots$\!\!}&&1&&3&&2&&2&&1&&4&&2&&1&&{\!\!$\cdots$\!\!}\\
{\!\!$\cdots$\!\!}&&1&&2&&5&&3&&1&&3&&7&&1&&{\!\!$\cdots$\!\!}&\\
&{\!\!$\cdots$\!\!}&&1&&3&&7&&1&&2&&5&&3&&1&&{\!\!$\cdots$\!\!}\\
{\!\!$\cdots$\!\!}&&2&&1&&4&&2&&1&&3&&2&&2&&{\!\!$\cdots$\!\!}&\\
  &{\!\!$\cdots$\!\!}&&1&&1&&1&&1&&1&&1&&1&&1&&{\!\!$\cdots$\!\!}\\
\\
\end{tabular}  
}\\
\end{center}

In~\cite{CC}, Conway and Coxeter gave a complete classification of friezes in terms of triangulations of polygons: they showed that friezes of width $n$ are in one-to-one correspondence with triangulations of $(n+3)$-gons (modulo natural symmetries). This result can be reformulated in terms of Penner's $\lambda$-lengths as follows (we recall the necessary definitions in Section~\ref{prelim}). Given a triangulation $T$ of an $(n+3)$-gon, consider all triangles of $T$ as ideal triangles in the hyperbolic plane $\H^2$, with the horocycles chosen at vertices so that $\lambda$-lengths of all  edges of $T$ (including the sides of the polygon) are equal to $1$. Then the entries in the $k$-th non-trivial row of the corresponding frieze are the $\lambda$-lengths of diagonals connecting $i$-th and $(i+k+1)$-th vertices of the polygon. The diamond rule is then equivalent to the Ptolemy relation.

Connections of friezes to cluster algebras were first revealed in~\cite{Pr} and~\cite{CaCh}. In these terms, Conway--Coxeter friezes can be considered as associated to cluster algebras of type $A_n$. In particular, this gives rise to a natural generalization of the notion of frieze obtained by evaluation of cluster variables in a cluster algebra, see e.g.~\cite{ARS,MG1,FP}.  
As a consequence, friezes appear to be closely connected to various domains of mathematics, including combinatorics, representation theory, integrable systems, geometry, see~\cite{MG2} for an extensive survey.

The cluster algebras approach together with the results of~\cite{FST} provide a combinatorial construction of friezes associated to marked surfaces: in this model the entries of a frieze are assigned to arcs on a surface connecting marked points (while all the boundary arcs are assigned with $1$). Such frieze is called {\em unitary} if there exists a triangulation of the surface with vertices in marked points such that all arcs of the triangulation are assigned with $1$ (note that due to Laurent Phenomenon~\cite{FZ}, every triangulation gives rise to a unique unitary frieze). Conway--Coxeter's results assure that all friezes on a disc are unitary, which leads to a natural question:\\

{\em Are all friezes on a given surface unitary?}\\

An example of a non-unitary frieze from a once punctured disc (corresponding to cluster algebra of type $D_4$) was constructed by Thomas, see~\cite{BM}. This example was incorporated into a series of examples by Fontaine and Plamondon~\cite{FP} (see Example~\ref{ex-FP} below), who classified all friezes of type $D_n$. However, no examples of non-unitary friezes on unpunctured surfaces are known. Moreover, it was shown in~\cite{GS} that all friezes on an annulus are unitary, and in~\cite{CFGT} that all friezes on a pair of pants are also unitary.

\smallskip

In the present paper, we provide a complete classification of friezes constructed from marked bordered surfaces in the style of Conway -- Coxeter. More precisely, we prove that, similarly to the case of punctured disc, every non-unitary frieze can be obtained from a unitary frieze by multiplying the labels of all arcs incident to a given puncture by a constant. The main result can be formulated as follows.

\begin{thrm}[Theorems~\ref{all-pts},~\ref{main}, Remark~\ref{uniqueness}]
  \label{th-main}
A frieze from a marked bordered surface $S$ is uniquely defined by an ideal triangulation $T$ of $S$ and a collection of positive integers $\{k_i\}$ at all punctures $\{P_i\}$, such that $k_i$ divides the valence of $P_i$ in $T$. Every such data defines a frieze.  

  \end{thrm}
  In particular, we get the following immediate corollary (where equivalence is defined up to the action of the mapping class group, see Section~\ref{prelim}).

\setcounter{section}{4}
\setcounter{theorem}{13}
  
\begin{cor}
    All friezes on unpunctured surfaces are unitary. There is a bijective correspondence between equivalence classes of friezes and combinatorial types of ideal triangulations.
    \end{cor}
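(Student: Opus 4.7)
The plan is to derive both parts of the corollary as immediate consequences of the Main Theorem, combined with the defining property of a unitary frieze. First I would specialise the Main Theorem to the case where $S$ has no punctures: the indexing set $\{P_i\}$ is then empty, the datum $\{k_i\}$ is vacuous, and the theorem reduces to the statement that friezes on $S$ are in one-to-one correspondence with ideal triangulations of $S$, where the frieze associated to $T$ is obtained from Penner's decorated hyperbolic structure in which every edge of $T$ has $\lambda$-length $1$.

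Next I would identify this frieze with the unitary frieze of $T$. By definition, the unitary frieze attached to a triangulation $T$ is the unique frieze (existing by the Laurent Phenomenon) in which every arc of $T$ is labelled by $1$. Since the decorated ideal triangulation used in the Main Theorem construction labels each edge of $T$ with $\lambda$-length $1$, and the labels of all other arcs are then determined by the Ptolemy (diamond) relation, the frieze produced by the Main Theorem from $(T,\emptyset)$ coincides on the nose with the unitary frieze of $T$. Hence every frieze on an unpunctured surface is unitary, which is the first assertion.

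For the second assertion, I would observe that the assignment $T\mapsto(\text{frieze of }T)$ is equivariant with respect to the mapping class group of $S$: applying a self-homeomorphism to $S$ transports an ideal triangulation to an ideal triangulation and permutes the $\lambda$-lengths of arcs accordingly, so the induced frieze is simply reindexed. Passing to quotients, the bijection of the Main Theorem descends to a bijection between equivalence classes of friezes and combinatorial types of ideal triangulations.

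I do not expect any serious obstacle: the corollary is essentially a restatement of the Main Theorem in the absence of punctures, and the only point requiring verification—the compatibility of the bijection with the mapping class group action—is automatic from the intrinsic, geometric nature of Penner's construction.
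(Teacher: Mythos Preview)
Your proposal is correct and follows essentially the same approach as the paper: the corollary is stated there as an immediate consequence of Theorem~\ref{main} together with the uniqueness observation in Remark~\ref{uniqueness}, which in the unpunctured case collapses exactly to the argument you outline (no punctures, hence no rescaling constants, hence every frieze is the unitary one attached to a unique triangulation, and the bijection descends modulo the mapping class group). The only cosmetic difference is that the paper invokes the bordered-surface Theorem~\ref{main} directly rather than the packaged ``Main Theorem'' of the introduction, but these are the same statement in this setting.
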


The Main Theorem also assures that every triangulation gives rise to a finite number of friezes. Since the number of combinatorial types of triangulations is finite, we get the following result.
    
\begin{cor}
  The number of equivalence classes of friezes on a given bordered surface is finite.

  \end{cor}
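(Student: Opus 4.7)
The plan is to deduce this corollary directly from the Main Theorem as a counting argument, reducing finiteness of friezes to two elementary finiteness statements about the combinatorial data indexing them.

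First I would invoke the Main Theorem to reparametrize: every equivalence class of friezes on $S$ corresponds to a pair $(T,\{k_i\})$ where $T$ is an ideal triangulation of $S$ considered up to the mapping class group, and $\{k_i\}$ is a tuple of positive integers indexed by the punctures $P_i$ with each $k_i$ dividing the valence $v_T(P_i)$ of $P_i$ in $T$. So it is enough to bound the number of such pairs.

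Next I would appeal to the standard fact that the set of combinatorial types of ideal triangulations of a marked bordered surface is finite modulo $\mathrm{MCG}(S)$. This is a consequence of the formula for the number of arcs in a triangulation of $S$ (depending only on genus, number of boundary components, number of marked points on the boundary, and number of punctures), so every triangulation is encoded by a finite combinatorial object (for instance its dual graph on a bounded number of vertices), and only finitely many isotopy classes of such encodings can arise. Having fixed such a combinatorial type $T$, each valence $v_T(P_i)$ is a concrete positive integer, and the number of admissible divisor-tuples is at most $\prod_i d(v_T(P_i))$, which is finite. Multiplying by the finite number of combinatorial types gives the required bound.

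There is no real obstacle here beyond checking that the indexing supplied by the Main Theorem truly captures equivalence classes up to $\mathrm{MCG}(S)$, so that two different combinatorial types of $(T,\{k_i\})$ never collapse to a single frieze; this is exactly the uniqueness statement recorded in Remark~\ref{uniqueness}, which is built into the formulation of the Main Theorem. The only subtlety worth flagging explicitly is that the mapping class group action on pairs $(T,\{k_i\})$ permutes punctures, so one should quotient the count of divisor-tuples by the induced action — but this can only decrease the count and therefore does not affect finiteness.
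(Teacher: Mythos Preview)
Your argument is correct and matches the paper's reasoning exactly: the paper derives this corollary immediately from the Main Theorem by noting that each triangulation admits only finitely many divisor-tuples $\{k_i\}$, and that there are only finitely many combinatorial types of ideal triangulations. Your additional remarks on uniqueness and the $\mathrm{MCG}$ action are accurate refinements but not needed for the finiteness claim itself.
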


Our proofs are based on the decorated hyperbolic structure defined by a frieze on a surface~\cite{P}. We show that the integrality condition guarantees that the uniformization of the surface is compatible with the action of (a certain subgroup of) $SL_2(\Z)$, so the Farey triangulation on $\H^2$ induces a triangulation on the surface, and thus gives rise to a unitary frieze (where the values are $\lambda$-lengths measured with respect to Ford circles), which can be transformed to the original frieze by choosing different horocycles at punctures.

\medskip

  For closed punctured surfaces the Main Theorem does not hold. More precisely, in Section~\ref{closed} we present several examples showing that the scaling constants may not divide the valence of a puncture (Example~\ref{much}), may not be integer (Example~\ref{non-integers}), and moreover scaling of two distinct unitary friezes may lead to the same frieze (Example~\ref{non-unique}). Nevertheless, the following theorem holds for friezes on both bordered and closed surfaces.

\setcounter{section}{3}
\setcounter{theorem}{10}  

    \begin{theorem}
      Given an ideal triangulation $T$ of a marked surface $S$ and a collection of positive integers $\{k_i\}$ at all punctures $\{P_i\}$ such that $k_i$ divides the valence of $P_i$ in $T$, one can define a frieze on $S$ by scaling the corresponding unitary frieze.

      \end{theorem}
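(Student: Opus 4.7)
My plan is to build the claimed frieze by rescaling horocycles at the punctures in Penner's decorated hyperbolic structure underlying the unitary frieze $F_T$. The earlier sections of the paper establish that $F_T$ arises from a decoration in which each arc of $T$ has $\lambda$-length $1$; in particular all $\lambda$-lengths, for both plain and notched (tagged) arcs at punctures, come out as positive integers (e.g.\ via the Farey/Ford identification, in which $P_i$ lifts to a cusp of width $v_i$).

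For each puncture $P_i$ I would modify the decoration by changing the horocycle(s) at $P_i$. The plain $\lambda$-length of an arc with $m_i^+$ plain endpoints at $P_i$ is then rescaled by $k_i^{m_i^+}$, while the notched $\lambda$-length of an arc with $m_i^-$ notched endpoints at $P_i$ is rescaled by $(v_i/k_i)^{m_i^-}$. I expect the two rescalings to be forced to be complementary (up to the $v_i$ cusp-width normalisation coming from the Farey uniformisation), since a puncture carries a single scaling degree of freedom rather than two. The Ptolemy/diamond relations should survive the rescaling: for every such identity the two sides carry the same multiset of tagged endpoints at each puncture, and hence scale by the same monomial in the $k_i$'s and the $v_i/k_i$'s.

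Positivity and the boundary-value condition are then automatic, since boundary arcs have no puncture endpoint and so remain labelled $1$. The key integrality check reduces to the observation that both rescaling factors $k_i$ and $v_i/k_i$ must be positive integers, which is exactly the content of the hypothesis $k_i\mid v_i$, combined with the already-established integrality of $F_T$. The hardest step in carrying this out, and the main obstacle, will be justifying the complementary rescaling formula for notched arcs: once the two-horocycle picture at a puncture with the correct $v_i$-normalisation is in place, everything else is a direct bookkeeping in scaling exponents.
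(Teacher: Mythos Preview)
Your overall approach---shrink the horocycle at each puncture $P_i$ and read off the effect on $\lambda$-lengths---is exactly what the paper does. The gap is in your notched-arc formula. Changing the horocycle at $P_i$ so that plain ends pick up a factor $k_i$ forces the \emph{conjugate} horocycle to move in the opposite direction (since $l(h)\,l(\bar h)=1$), so every notched end at $P_i$ picks up a factor $k_i^{-1}$, not $v_i/k_i$. Your own homogeneity check on the exchange relations would have flagged this: in the digonal relation $\lambda_\theta\lambda_{\bar\theta}=\lambda_\alpha+\lambda_\beta$ (with $\theta$ plain and $\bar\theta$ notched at the internal puncture $P_i$, and $\alpha,\beta$ not touching $P_i$), the right-hand side is unchanged while your left-hand side would scale by $k_i\cdot(v_i/k_i)=v_i\neq 1$. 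With the correct factor $k_i^{-1}$ both sides are unchanged.

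Once the factor is $k_i^{-1}$, integrality no longer follows from ``$F_T$ is integral'' alone: you must show that $F_T(\gamma)$ is divisible by $k_i$ for every arc $\gamma$ notched at $P_i$ (and by $k_i^2$ for notched loops). This is the actual content of the proof. The paper handles it via Remark~\ref{notched}: in the unitary frieze, computing a notched end at $P_i$ means replacing the Ford circle by the conjugate horocycle, and in the Farey model this amounts to an extra factor of $v_i=\val_T(P_i)$; hence every $F_T$-value notched at $P_i$ is divisible by $v_i$, and $k_i\mid v_i$ finishes it. Your write-up should include this step (or an equivalent argument), since it is precisely where the hypothesis $k_i\mid v_i$ is used.
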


If a surface has an ideal triangulation with at least one puncture of non-prime valence, the procedure given in Theorem~\ref{all-pts} gives rise to a non-unitary frieze. 
      
        \begin{cor}
          Let $S$ be a punctured marked surface different from once punctured digon or triangle, and from twice punctured monogon. Then there exists a non-unitary frieze on $S$. 

        \end{cor}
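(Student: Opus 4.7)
By the observation preceding the corollary, it suffices to produce, for each eligible $S$, an ideal triangulation $T$ and a puncture $P$ whose valence in $T$ is composite. Applying Theorem~\ref{all-pts} with $k_P$ any proper non-trivial divisor of $v_P$ (and $k_{P'}=1$ at the other punctures) then produces a non-unitary frieze, so the proof reduces to the combinatorial task of realising composite valence at some puncture.

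I would proceed by a short case analysis on the topological type of $S$. For a bordered disk with $n$ boundary marked points and $p\ge 1$ punctures, the number of internal arcs in any triangulation is $n+3p-3$. If $p=1$ and $n\ge 4$, the star triangulation from the unique puncture already has valence $n\ge 4$. If $p\ge 2$ with $(n,p)\neq (1,2)$, I would choose a puncture $P$, join it to each other puncture and to a chosen set of boundary vertices (possibly inserting a loop at $P$ enclosing another puncture), then complete to a full triangulation; a direct valence count confirms that $v_P\ge 4$ can be achieved. For a closed punctured surface with $p\ge 4$ punctures, or for a surface of positive genus or with several boundary components, additional homotopically non-trivial loops at any chosen puncture are available, and a triangulation with $v_P\ge 4$ is easier still to write down.

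The main obstacle is the verification in the small borderline cases, in particular the once-punctured quadrilateral, the twice-punctured digon, the thrice-punctured monogon, and the four-punctured sphere, where the triangulation must be exhibited explicitly and one must check both that the arcs at $P$ combine to valence at least $4$ and that the complementary regions admit ideal triangulations on the remaining arcs. The choice of the three surfaces excluded in the statement is forced by the same count: the arc numbers $n+3p-3$ equal $2$, $3$, and $4$ respectively, and in each the configuration is too rigid for any puncture to reach valence $4$, so every triangulation has all puncture valencies in $\{1,2,3\}$, which are either unital or prime.
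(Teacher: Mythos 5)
Your proposal follows exactly the paper's route: the paper's entire proof is the reduction, via Theorem~\ref{all-pts}, to exhibiting an ideal triangulation with a puncture of non-prime (i.e.\ composite) valence, followed by the assertion that this is easy for every surface outside the excluded list. Your case analysis supplies the detail the paper omits, and your explanation of why the three excluded surfaces fail (arc counts $2$, $3$, $4$ forcing all puncture valencies into $\{1,2,3\}$) is consistent with the paper.

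One slip to correct: you state the goal correctly as \emph{composite} valence, but then repeatedly test only for valence $\ge 4$. These are not the same. For a once-punctured $n$-gon with $n$ prime (e.g.\ $n=5$), the star triangulation gives the puncture valence $n$, which is prime, so that particular construction does not produce a non-unitary frieze; Theorem~\ref{all-pts} with $k$ a proper divisor strictly between $1$ and $\val_T(P)$ requires a composite valence (note that $k=\val_T(P)$ itself only reproduces a unitary frieze with switched tagging, cf.\ Remark~\ref{uniqueness}). The fix is immediate: instead of the full star, connect the puncture to exactly four boundary vertices and complete the triangulation arbitrarily, so the valence is exactly $4$; the same adjustment handles the other cases where your loop/arc counts only guarantee a lower bound rather than a specific composite value.
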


        Also, under some additional assumptions one can guarantee that a frieze on a closed surface is unitary.

\setcounter{section}{5}
\setcounter{theorem}{0}  

    \begin{prop}
Let $F$ be a frieze on a marked surface $S$, and let $T$ be an ideal triangulation of $S$ such that the values of $F$ on the sides of every triangle of $T$ are mutually coprime. Then $F$ is unitary.

      \end{prop}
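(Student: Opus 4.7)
The plan is to use the Main Theorem to express $F$ as a rescaling of a unitary frieze, and then apply the pairwise coprimality hypothesis on $T$ to collapse every rescaling constant to $1$, so that $F$ itself is unitary.

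First, by the Main Theorem there exists an ideal triangulation $T_0$ of $S$ and a collection of positive integers $\{k_i\}$ at the punctures such that, for every arc $\gamma$ of $S$,
$$F(\gamma) = \Bigl(\prod_i k_i^{\mu_i(\gamma)}\Bigr)\, F_{T_0}(\gamma),$$
where $F_{T_0}$ is the unitary frieze of $T_0$ and $\mu_i(\gamma) \in \{0,1,2\}$ counts the endpoints of $\gamma$ at $P_i$. Because $F_{T_0}$ is obtained by evaluating the cluster variables of $T_0$ at the initial cluster $\equiv 1$, the Laurent phenomenon guarantees $F_{T_0}(\gamma) \in \Z_{>0}$ for every arc $\gamma$, so $k_i$ divides $F(\gamma)$ whenever $\gamma$ is incident to $P_i$.

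I would then fix an arbitrary puncture $P_i$ and pick any triangle $\Delta$ of $T$ containing $P_i$ as a vertex -- such a $\Delta$ always exists, since every puncture is a vertex of every ideal triangulation. Let $a$ and $b$ denote the two sides of $\Delta$ meeting at $P_i$, counted with multiplicity. If $a \ne b$, then $k_i$ divides both $F(a)$ and $F(b)$, so $k_i \mid \gcd(F(a), F(b)) = 1$ by the hypothesis, forcing $k_i = 1$. In the self-folded case $a = b$ (where $P_i$ is the inner vertex), the three side values of $\Delta$ form a multiset containing $F(a)$ twice, and pairwise coprimality forces $F(a) = 1$; since $k_i \mid F(a)$, again $k_i = 1$. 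Running this argument across all punctures yields $k_i = 1$ for every $i$, so $F = F_{T_0}$ is unitary.

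The hard part will be securing the initial decomposition when $S$ is closed: the examples in Section~\ref{closed} show that the constants $k_i$ need not even be integer in general, and the Main Theorem as stated does not apply. To close this gap one would use Penner's decorated hyperbolic description of $F$ to extract the $k_i$ directly, and then argue that the pairwise coprimality in each triangle already forces the scaling constants relating the horocycles of $F$ to the Ford horocycles of the ambient Farey-type triangulation to lie in $\Z_{>0}$; once this is established, the combinatorial core above applies verbatim.
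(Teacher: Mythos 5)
Your argument is sound for surfaces with non-empty boundary, but it does not prove the proposition as stated: the statement covers an arbitrary marked surface, and its entire point (it sits in Section~\ref{closed}) is that it applies to \emph{closed} surfaces, where Theorem~\ref{main} is unavailable. You flag this yourself, but the proposed repair begs the question. For a closed surface there is no a priori decomposition $F(\gamma)=\prod_i k_i^{\varepsilon_i(\gamma)}\hat F(\gamma)$ with $\hat F$ unitary --- nothing in the paper asserts that every frieze on a closed surface is a rescaling of a unitary one, and Examples~\ref{much}--\ref{non-unique} show that even when such a decomposition exists the constants can fail to be integers, to divide the valences, or to be unique. Worse, your fix speaks of ``the scaling constants relating the horocycles of $F$ to the Ford horocycles of the ambient Farey-type triangulation'', but the existence of that ambient structure --- that the uniformization sends all punctures to rational points so that Ford circles and the Farey triangulation are even available on the universal cover --- is precisely what has to be established; until then there is nothing to compare horocycles to.

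The paper's proof supplies exactly this missing step, and it is structurally different from your reduction: it builds the Ford-circle lift \emph{directly} from the coprimality hypothesis, with no detour through a rescaling decomposition. One fixes a triangle $PQR$ of $T$ with side values $r=F(PQ)$, $q=F(PR)$, $a=F(QR)$; mutual coprimality and B\'ezout give $x,y$ with $|ya-xq|=r$, so the triangle lifts to $\tilde R=\infty$, $\tilde Q=x/a$, $\tilde P=y/q$ with Ford circles realizing the three $\lambda$-lengths via~(\ref{eq_det}). The crucial point is propagation to the adjacent triangle $AQR$ with $F(AQ)=c$, $F(AR)=b$: the Ptolemy relation gives $\lambda_{PA}=(br+qc)/a=qz-by\in\Z$ with $z=(c+xb)/a$, and coprimality of $q$ with $a$ forces $z\in\Z$ and coprime to $b$, so $\tilde A=z/b$ again carries a Ford circle. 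Iterating over adjacent triangles shows every preimage of every puncture is rational with a Ford horocycle, whence the Farey triangulation descends to a unitary triangulation for $F$. This inductive propagation across triangles of $T$ is the idea your proposal is missing; in the bordered case your route through Theorem~\ref{main} does work (modulo treating self-folded triangles carefully), but it cannot be the proof of the proposition in the generality claimed.
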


      \bigskip

      The paper is organized as follows. In Section~\ref{prelim} we remind all essential details about tagged triangulations, $\lambda$-lengths and decorated hyperbolic structures, Farey triangulation, and friezes on surfaces. Section~\ref{defs} is devoted to construction of non-unitary friezes on punctured surfaces (Theorem~\ref{all-pts}). In Section~\ref{sec_bordered}, we classify friezes on surfaces with boundary by proving that every frieze can be obtained from a unitary one by rescaling (Theorem~\ref{main}). Finally, in Section~\ref{closed} we discuss partial results and counterexamples concerning friezes on closed punctured surfaces.

      \subsection*{Acknowledgements}
The work was inspired by a question (answered in the Appendix) asked by Alain Valette at the conference ``Journ\'ees de g\'eometrie hyperbolique'' in Fribourg. We are grateful to Alain Valette for the question and to Naomi Bredon and Ruth Kellerhals for organizing the conference. The paper was written at the Max Planck Institute for Mathematics in Bonn, we thank the Institute for the financial support and excellent research environment.

      \setcounter{section}{1}
\setcounter{theorem}{0}

\section{Tagged triangulations, $\lambda$-lengths and friezes}
\label{prelim}
In this section we recall necessary results about hyperbolic surfaces and their triangulations.

\subsection{Tagged triangulations}
\label{tagged}

We briefly recall the construction of tagged triangulations of marked surfaces from~\cite{FST} (see also~\cite[Section~5]{FT}).

Let $S$ be a surface with marked points and (possibly empty) boundary, so that every boundary component contains at least one marked point. The marked points in the interior of $S$ are called {\em punctures}. We exclude a closed sphere with at most $3$ punctures, an unpunctured disc with at most $3$ marked points, and once punctured monogon.

An {\em arc} is a simple curve with endpoints in marked points defined up to isotopy. An arc is called a {\em loop} if its endpoints coincide. An {\em ideal triangulation} of $S$ is a maximal collection of mutually non-intersecting arcs. An ideal triangulation subdivides $S$ into triangles. A triangle whose two edges coincide is called a {\em self-folded triangle}.

A {\em tagged triangulation} is obtained from an ideal triangulation as follows. Let $P$ be a puncture incident to two coinciding edges of a self-folded triangle (i.e. $P$ is located inside a once punctured monogon), and let $Q$ be the other vertex of that triangle. The loop at $Q$ bounding the self-folded triangle is substituted with a copy of the arc $PQ$ tagged {\em notched at $P$}, the two obtained arcs form a {\em conjugate pair at} $P$, see Fig.~\ref{fig-self-folded}. This procedure has to be done for every self-folded triangle. Further, choose any collection of punctures (without conjugate pairs at them) and declare all arcs incident to these punctures to be tagged notched at these punctures. All the other ends of arcs are tagged {\em plain}.

In other words, tagged triangulations consist of {\em tagged} arcs, where every tagged arc is tagged {\em notched} or {\em plain} at every vertex. All tags at a boundary marked point are plain. All tags at a puncture coincide, unless there is a conjugate pair at it. 

\begin{figure}[!h]
\begin{center}
  \epsfig{file=./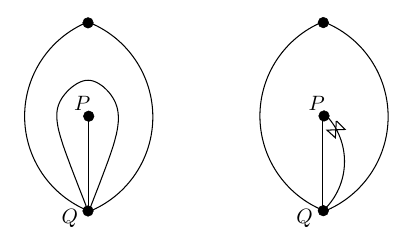,width=0.4\linewidth}
  \caption{Self-folded triangle in an ideal triangulation (left); conjugate pair at $P$ in a tagged triangulation (right).}
\label{fig-self-folded}
\end{center}
\end{figure}

Tagged triangulations undergo {\em flips}: for every arc $\gamma$ of a tagged triangulation $T$ there exists a unique tagged arc $\gamma'$ such that $T'=(T\setminus\gamma)\cup\gamma'$ is also a tagged triangulation. 

From now on, by {\em triangulation} we mean a tagged triangulation. If we need to consider an ideal triangulation we will specify this explicitly.

\subsection{Decorated hyperbolic surfaces}
\label{decorated}

Given two points $A,B\in\partial\H^2$ on the boundary of the hyperbolic plane with horocycles $h_A$ and $h_B$ centered at $A$ and $B$ respectively, Penner~\cite{P} defined {\em $\lambda$-length} $\lambda_{AB}$ as $\exp(d/2)$, where $d$ is the signed distance between the horocycles $h_A$ and $h_B$. Here $d<0$ if the horocycles intersect in two points (in which case $d$ is the negative of the distance between $AB\cap h_A$ and $AB\cap h_B$).

In the upper halfplane model of $\H^2$, horocycles are Euclidean circles tangent to the real line (or horizontal lines for horocycles at $\infty$). An easy computation shows that the $\lambda$-lengths depend on the choice of horocycles as follows: dividing the Euclidean diameter of a horocycle by $k^2$ (or multiplying the $y$-coordinate of a horizonal line by $k^2$) multiplies the corresponding $\lambda$-length by $k$.

As it was shown in~\cite{P}, $\lambda$-lengths in $\H^2$ satisfy {\em Ptolemy relation}. Namely, given an ideal quadrilateral $ABCD$ with a horocycle at every vertex, one has
\begin{equation}
  \label{eq_Ptolemy}
  \lambda_{AC}\lambda_{BD}=\lambda_{AB}\lambda_{CD}+\lambda_{AD}\lambda_{BC}.
  \end{equation}

Given a set $M$ of marked points on $S$, a {\em decorated hyperbolic structure} on $(S,M)$ is a complete hyperbolic metric on $S\setminus M$ together with a chosen horocycle at every point of $M$. Penner~\cite{P} shows that  given an ideal triangulation $T$ of a marked surface $(S,M)$ with positive numbers assigned to each arc of $T$, there exists a unique decorated hyperbolic structure on $(S,M)$ such that the assigned numbers are precisely $\lambda$-lengths of the corresponding arcs of $T$. 

Results of~\cite{P} (and the Uniformization Theorem) imply that, given an ideal triangulation $T$ with arcs labeled by positive numbers, $S$ can be represented as a quotient of a triangulated domain $\Omega$ in the hyperbolic plane $\H^2$ by a certain discrete group $\Gamma$ of isometries of $\H^2$, such that the following hold:
\begin{itemize}
\item[-]
  $\Omega$ is an infinite polygon bounded by preimages of boundary arcs of $S$;
\item[-]
  collection of horocycles at preimages of every puncture is $\Gamma$-invariant;
\item[-]
  the $\lambda$-lengths of the arcs of the triangulation of $\Omega$ are precisely the labels of their images in $S$.
  \end{itemize}
From now on, we will write $S$ instead of $(S,M)$ assuming there is no ambiguity.

\subsection{$\lambda$-lengths of tagged arcs}
\label{lambda}
In this section we recall from~\cite{FT} the hyperbolic geometry related to tagged triangulations. 

Let $S$ be a marked surface with decorated hyperbolic structure, let $P$ be a puncture and $h$ be the corresponding horocycle. One can uniquely define a {\em conjugate horocycle} $\bar h$ at $P$ as follows: if the length of $h$ as a (non-geodesic) curve in the hyperbolic metric is $l(h)$, then $\bar h$ is defined by $l(\bar h)=1/l(h)$. The $\lambda$-length of any arc tagged notched at $P$ is then defined as follows: take the same arc tagged plain at $P$, and compute the $\lambda$-length with respect to the horocycle $\bar h$ at $P$.   

The $\lambda$-lengths of homotopic arcs $\gamma$ and $\bar\gamma$ from $Q$ to $P$ tagged differently at $P$ are related as follows:
\begin{equation}
  \label{eq_loop}
  \lambda_\gamma\lambda_{\bar\gamma}=\lambda_l,
  \end{equation}
where $l$ is the loop at $Q$ going around $P$ by following $\gamma$ as in Fig.~\ref{fig-flip}, left.

If an arc $\gamma$ in a tagged triangulation is not a part of a conjugate pair, and neither are all adjacent arcs, then the $\lambda$-lengths of $\gamma$ and the flipped arc $\gamma'$ are related by the Ptolemy relation. In general, the relation between $\gamma$ and $\gamma'$ needs to be adjusted, see~\cite[Section~8]{FT}. In particular, flipping an internal arc of a once punctured digon results in the following {\em digonal relation} (see Fig.~\ref{fig-flip}, right) for $\theta'=\bar\gamma$:
\begin{equation}
  \label{eq_digonal}
  \lambda_\theta\lambda_{\theta'}=\lambda_\alpha+\lambda_\beta.
  \end{equation}
We will follow~\cite{FT} by calling various relations on $\lambda$-lengths of tagged arcs by (generalized) {\em Ptolemy relations}.

\begin{figure}[!h]
\begin{center}
  \epsfig{file=./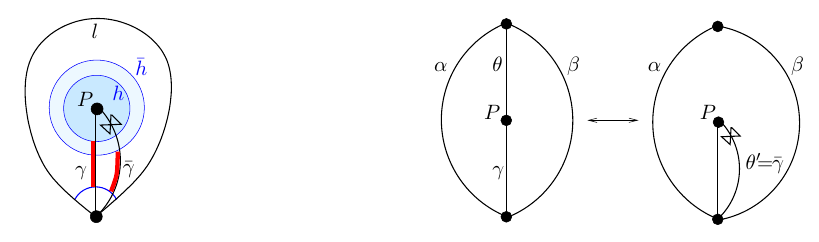,width=0.9\linewidth}
  \caption{$\lambda$-lengths of tagged arcs. Left: conjugate horocycle; right: flip of an arc in a punctured digon.}
\label{fig-flip}
\end{center}
\end{figure}

\subsection{Farey triangulation and Ford circles}
\label{Farey}

Recall that the {\em Farey graph} on $\H^2$ (in the upper halfplane model) has vertices in all rational points of $\R$ (and $\infty=\frac10$), with two vertices $\frac{p}q$ and $\frac{u}v$ being joined by an edge (represented by a hyperbolic geodesic) if $|pv-qu|=1$ (we assume all fractions to be reduced). A {\em Ford circle} at a point $\frac{p}q$ is a horocycle centered at $\frac{p}q$ of Euclidean diameter $\frac1{q^2}$ (or a horizontal Euclidean line $\mathrm{Im}\, z=1$ as a horocycle at $\infty$). Farey graph provides a triangulation of $\H^2$ with every triangle being an ideal triangle, with $\lambda$-lengths of all edges with respect to Ford circles being equal to $1$. The set of Ford circles is invariant with respect to the action of $PSL_2(\Z)\subset \mathrm{Isom}(\H^2)$. 

The $\lambda$-lengths between arbitrary two vertices of the Farey graph with respect to Ford circles are expressed as follows (see e.g.~\cite{MGOT,FKST}): if the reduced fractions for the two points are  $\frac{p}q$ and $\frac{u}v$, then
\begin{equation}
  \label{eq_det}
  \lambda_{\frac{p}q,\frac{u}v}=|pv-qu|.
  \end{equation}

\subsection{Friezes constructed from surfaces}
\label{friezes}

Recall from previous sections that given a marked surface $S$ and a triangulation $T$, an assignment of $\lambda$-lengths to all arcs of $T$ defines uniquely a decorated hyperbolic structure on $S$ and thus $\lambda$-lengths of all tagged arcs. 

Given a marked surface $S$, a {\em positive frieze} on $S$ is an assignment of $\lambda$-lengths to all arcs of some triangulation $T$, or, equivalently, a decorated hyperbolic structure (and thus a map $F:\{\gamma\}\to\R$ defined on the set of tagged arcs, where $F(\gamma)$ is the $\lambda$-length of $\gamma$). We require $\lambda$-lengths of all boundary arcs to be equal to $1$. 

We note that usually friezes are defined as ring homomorphisms of the corresponding cluster algebra $\mathcal{A}(S)$ to $\R$ (see e.g.~\cite{ARS,MG1}), where a frieze is {\em positive} if the range of the map is in $\R_+$. Due to results of~\cite{P,FST,FT} the definition we use is equivalent to the usual one. Here, the cluster variables are represented by tagged arcs, and the exchange relations for cluster variables are transformed into Ptolemy relations on the corresponding $\lambda$-lengths.

A positive frieze $F$ is {\em integral} if $F(\gamma)\in\Z_+$ for every tagged arc $\gamma$ on $S$. From now on, we will consider positive integral friezes only, so we call them {\em friezes} for short.

A frieze is {\em unitary} if there exists a triangulation $T$ of $S$ such that $F(\gamma)=1$ for every $\gamma\in T$ (by Ptolemy relation, such triangulation, if exists, is unique). Equivalently, all $\lambda$-lengths of arcs of $T$ are equal to $1$, i.e. the two horocycles at the endpoints of any arc of $T$ are tangent.   

For example, it follows from~\cite{CC} that all friezes on an unpunctured disc (i.e. a polygon) are unitary.

\section{Non-unitary friezes on punctured surfaces}
\label{defs}

In this section, we provide a construction of non-unitary friezes on punctured surfaces. We start with an example from~\cite{FP}.

\begin{example}
\label{ex-FP}
In~\cite{FP}, Fontaine and Plamondon construct a series of non-unitary friezes on a once punctured disc $D_n$ with $n$ boundary marked points  as follows. Take any tagged triangulation $T$ of $D_n$, define $F(\gamma)=1$ for all arcs not incident to the puncture, and $F(\gamma)=k$ for all arcs incident to the puncture, where $k$ divides the valence of the puncture in $T$ (here the valence is the number of ends of arcs at the puncture in the corresponding ideal triangulation). It is also shown in~\cite{FP} that all non-unitary friezes on $D_n$ can be constructed in this way.

  \end{example}

  \begin{remark}
    \label{disc}
    Let us make the following observation which will be the key for our further considerations. The non-unitary friezes described in Example~\ref{ex-FP} can be obtained from unitary friezes by changing the horocycle at the puncture (more precisely, the Euclidean size of the horocycle in the upper halfplane model of $\H^2$ is divided by $k^2$).

    Note that the condition on the divisibility is implied by the equation~(\ref{eq_loop}) which states that $\lambda_\gamma\lambda_{\bar\gamma}=\lambda_l$ for any conjugate pair $(\gamma,\bar\gamma)$, where $l$ is the loop following $\gamma$ as in Fig.~\ref{fig-flip}, left. Namely, given any tagged triangulation $T$ with all $\lambda$-lengths equal to $1$, and an arc $\gamma\in T$ incident to the puncture, a direct computation shows that the value of $\lambda_l$ is equal to the valence of the puncture in the corresponding ideal triangulation (and it stays intact under change of the horocycle at the puncture). Thus, equation~(\ref{eq_loop}) implies that for any $\gamma\in T$ incident to the puncture, $\lambda_\gamma$ has to divide the valence after any change of the horocycle.    
    \end{remark}

The remaining part of the section is devoted to applying the observations made in Remark~\ref{disc} to all punctured surfaces.

\begin{definition}
  \label{epsilon}
  Let $S$ be a marked decorated hyperbolic surface, denote by $\{P_i\}$ the marked points on $S$. Let $\gamma$ be an arc on $S$. Define an integer $\varepsilon_i(\gamma)$ as follows:
  \begin{itemize}
  \item[-]
    if $\gamma$ is tagged plain at $P_i$, then $\varepsilon_i(\gamma)$ is the number of ends of $\gamma$ at $P_i$;
\item[-]
  if $\gamma$ is tagged notched at $P_i$, then $\varepsilon_i(\gamma)$ is the negative of the number of ends of $\gamma$ at $P_i$.
\end{itemize}
\end{definition}

\begin{definition}
  \label{valence}
  Given a triangulation $T$ of $S$ and a puncture $P_i$, the {\em valence} $\val_T(P_i)$ of $P_i$ in $T$ is defined as the number of ends of arcs at $P_i$ in the corresponding ideal triangulation.  
\end{definition}

\begin{definition}
Given a unitary frieze $F$ on $S$, a triangulation $T$ of $S$ is {\em unitary} if $F(\gamma)=1$ for every arc $\gamma$ of $T$. 
  \end{definition}

  \begin{remark}
    \label{un_ideal}
Let $T$ be a triangulation, consider the corresponding ideal triangulation $\widetilde T$. Then equation~(\ref{eq_loop}) implies that $T$ is unitary if and only if $\lambda_\gamma=1$ for any $\gamma\in \widetilde T$.  
    \end{remark}
  
  \begin{remark}
    \label{unitary}
    Let $T$ be a unitary triangulation of $S$. Take any triangle of the corresponding ideal triangulation $\widetilde T$ and lift it to $\H^2$ to a triangle with vertices $0,1,\infty$ and with Ford circles as horocycles (which is possible by Remark~\ref{un_ideal}). Gluing the adjacent triangles of $\widetilde T$, we obtain adjacent triangles of the Farey triangulation. Therefore, $S$ can be thought as a quotient of a domain $\Omega\subset\H^2$ by a discrete group $\Gamma\subset\mathrm{Isom}(\H^2)$, such that preimages of every marked point of $S$ are rational numbers, and the preimages of the horocycles in the corresponding decorated hyperbolic structure are precisely Ford circles.   
    \end{remark}

  \begin{lemma}
    \label{loop}
    Let $S$ be a marked decorated hyperbolic surface, and suppose there is a unitary triangulation $T$ of $S$. Let $P$ be a puncture, and let $PQ$ be an arc of $T$ ($P$ and $Q$ are distinct). Then the $\lambda$-length of the loop at $Q$ going along the arc $PQ$ around $P$ is equal to the absolute value of the valence of $P$ in $T$.  
    \end{lemma}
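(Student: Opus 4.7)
The plan is to lift the loop to $\H^2$ using the Farey picture established in Remarks~\ref{un_ideal} and~\ref{unitary}, and then to compute its $\lambda$-length directly via the determinant formula~(\ref{eq_det}).

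First, we pass to the ideal triangulation $\widetilde T$ corresponding to $T$; by Remark~\ref{un_ideal}, every arc of $\widetilde T$ has $\lambda$-length $1$. Remark~\ref{unitary} then lets us realize $S$ as a quotient of a domain $\Omega\subset\H^2$ by a discrete group, so that the arcs of $\widetilde T$ lift to arcs of the Farey triangulation and the horocycles at preimages of marked points are precisely the Ford circles. After applying a suitable element of $PSL_2(\Z)$, we may assume that a chosen preimage $\tilde P$ of $P$ sits at $0$, that the adjacent lift $\tilde Q_0$ of $Q$ on one side of $PQ$ is at $\infty=1/0$, and that the first triangle of $\widetilde T$ incident to $P$ on the chosen side of $PQ$ lifts to the Farey triangle $\{0,1,\infty\}$.

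Next, we walk around $\tilde P=0$ through the consecutive Farey triangles $\{0,\,1/(k-1),\,1/k\}$ for $k=1,2,\dots$, where each step is the lift of crossing one edge of $\widetilde T$ at $P$ into the next corner of $\widetilde T$ at $P$. Since the number of corners of $\widetilde T$ at $P$ equals $\val_T(P)=d$, after $d$ such steps the walk returns to a lift of $PQ$ on the opposite side of $P$ and ends at the outer vertex $1/d$, which is another preimage $\tilde Q_d$ of $Q$. The loop $l$ at $Q$ going along $PQ$ around $P$ therefore lifts to the geodesic from $\tilde Q_0=\infty$ to $\tilde Q_d=1/d$, so by equation~(\ref{eq_det}) its $\lambda$-length with respect to the Ford circles at these points equals $|1\cdot d-0\cdot 1|=d=\val_T(P)$; since $\val_T(P)\ge 0$ by Definition~\ref{valence}, this coincides with the absolute value of the valence.

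The main delicate point I anticipate is verifying that the number of consecutive Farey triangles traversed around $\tilde P$ really equals $\val_T(P)$, especially when $\widetilde T$ contains a self-folded triangle at $P$. In that case $d=1$, and the walk should pass through the single Farey triangle $\{0,1,\infty\}$ before the edge $\{0,1\}$ already projects back to $PQ$; this is consistent with $\lambda_l=1$, which also follows from the conjugate-pair identity $\lambda_l=\lambda_\gamma\lambda_{\bar\gamma}=1$ of equation~(\ref{eq_loop}). Apart from this bookkeeping, the proof reduces to a single application of~(\ref{eq_det}).
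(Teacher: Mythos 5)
Your proof is correct and takes essentially the same approach as the paper's: both realize the unitary triangulation inside the Farey triangulation with Ford circles as horocycles, identify the lift of the loop as the geodesic joining two consecutive lifts of $Q$ around a fixed lift of $P$, and evaluate its $\lambda$-length by the determinant formula~(\ref{eq_det}). The only cosmetic difference is the normalization — the paper sends $P$ to $\infty$ so that the arcs at $P$ become vertical lines over consecutive integers and the loop lifts to the geodesic from $0$ to $m$, while you send $P$ to $0$ and track the vertices $1/k$; the two pictures differ by an element of $PSL_2(\Z)$.
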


    \begin{proof}
Represent $S$ as a quotient of a domain in the hyperbolic plane $\H^2$, such that $PQ$ is lifted to the line  from $\tilde P=\infty$ to $\tilde Q=0$ with Ford circles as horocycles. Since $T$ is unitary, the arcs of the ideal triangulation $\widetilde T$ that are incident to $P$ are lifted to the lines connecting $\infty$ with consecutive integers, also with Ford circles as horocycles at the endpoints. Then the $\lambda$-length of the loop is precisely the $\lambda$-length between $\tilde Q=0$ and the next lift $\tilde Q^{(1)}$ of $Q$, i.e. $m$, where $m$ is the valence of $P$. By~(\ref{eq_det}), $\lambda_{\tilde Q\tilde Q^{(1)}}=m$.  
      
      \end{proof}

      \begin{remark}
Note that while flipping the arcs of a unitary triangulation incident to $P$, the sum of $\lambda$-lengths of ``third sides'' of the triangles incident to $P$ (in the corresponding ideal triangulations) remains intact, and thus it is equal to the valence of $P$ in the unitary triangulation.  
        \end{remark}

      \begin{remark}
        \label{notched}
        Assume that all arcs in a unitary triangulation $T$ are tagged plain (except for conjugate pairs). It follows from Lemma~\ref{loop} and~(\ref{eq_loop}) that the $\lambda$-length of the arc $PQ$ notched at $P$ is equal to the valence of $P$ in $T$ (denote it by $m$). Furthermore, to compute the $\lambda$-length of an arc incident to $P$ with changed tag at $P$ one needs to use the conjugate horocycle at $P$ instead of the original one. This implies that the $\lambda$-length of {\em every} arc tagged notched at $P$ is divisible by $m$ (and $\lambda$-lengths of loops at $P$ are divisible by $m^2$). Moreover, by the same reason, if an arc in $S$ tagged plain on both ends connects two punctures $P$ and $P'$, then the same arc tagged  notched on both sides has $\lambda$-length divisible by the product of valencies of $P$ and $P'$ in $T$.    
        
        \end{remark}

    \begin{theorem}
\label{all-pts}
Let $\hat F$ be a unitary frieze on a surface $S$, and let $T$ be the corresponding unitary triangulation (we assume that all arcs of $T$ except for conjugate pairs are tagged plain). Let $\{P_i\}$ be all marked points of $S$, denote $m_i=\val_T(P_i)$ (with $m_i=1$ if $P_i\in\partial S$), and let $\{k_i\}$ be divisors of $\{m_i\}$. Then there exists a frieze $F$ on $S$ such that  for any arc $\gamma$ on $S$ one has $\displaystyle{F(\gamma)=\prod_i\!k_i^{\varepsilon_i(\gamma)}\hat F(\gamma)}$, where $\varepsilon_i(\gamma)$ is as in Definition~\ref{epsilon}.

      \end{theorem}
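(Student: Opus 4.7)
The plan is to construct $F$ geometrically from the decorated hyperbolic structure defining $\hat F$ by rescaling horocycles at each puncture, and then to use Remark~\ref{notched} to verify integrality. First, I would invoke Remark~\ref{unitary} to realize $S$ as the quotient of a domain $\Omega \subset \H^2$ by a discrete group $\Gamma$, with marked points lifting to rational numbers and Ford circles as horocycles. I would then define a new decorated hyperbolic structure on $S$ by shrinking the horocycle at each puncture $P_i$: divide the Euclidean diameter of its horocycle (and of all its $\Gamma$-translates) by $k_i^2$. Since the shrinking is uniform over all lifts of $P_i$, the new family of horocycles is $\Gamma$-invariant and descends to horocycles on $S$. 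Horocycles at boundary points are unchanged, since $k_i = 1$ whenever $P_i \in \partial S$. Let $F(\gamma)$ denote the $\lambda$-length of a tagged arc $\gamma$ in this new structure; the (generalized) Ptolemy relations from Section~\ref{lambda} hold automatically, and $F$ equals $1$ on every boundary arc.

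Next I would verify the scaling formula
\[
F(\gamma) = \prod_i k_i^{\varepsilon_i(\gamma)}\, \hat F(\gamma).
\]
By the rule recorded in Section~\ref{decorated}, each plain end of $\gamma$ at $P_i$ contributes a multiplicative factor of $k_i$ to $F(\gamma)$. For a notched end at $P_i$ one must work with the conjugate horocycle $\bar h$ at $P_i$, defined by $l(\bar h) = 1/l(h)$. Shrinking $h$ by Euclidean factor $k_i^{-2}$ divides $l(h)$ by $k_i^2$ and so multiplies $l(\bar h)$ by $k_i^2$, i.e., enlarges $\bar h$, which divides the $\lambda$-length contributed by that notched end by $k_i$. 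Multiplying these local contributions over all ends of $\gamma$ yields the claimed formula, and in particular shows that $F$ agrees with $\hat F$ on the boundary arcs (where all local factors are $1$).

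The main obstacle is integrality of $F(\gamma)$ on tagged arcs with notched ends, since arcs with only plain ends are immediate: the scaling formula multiplies $\hat F(\gamma) \in \Z_+$ by nonnegative powers of the $k_i$. Here I would invoke Remark~\ref{notched}: if $\gamma$ has a single notched end at $P_i$, then $\hat F(\gamma)$ is divisible by $m_i$; if $\gamma$ is a loop notched on both ends at the same $P_i$, then $\hat F(\gamma)$ is divisible by $m_i^2$; and if $\gamma$ is notched at both ends at distinct punctures $P_i, P_j$, then $\hat F(\gamma)$ is divisible by $m_i m_j$. Since $k_i \mid m_i$ by hypothesis, in each case the divisions by $k_i$ appearing in the scaling formula are absorbed, so $F(\gamma) \in \Z_+$. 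Combined with the Ptolemy relations and the boundary condition, this exhibits $F$ as the desired positive integral frieze.
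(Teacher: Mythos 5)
Your proposal is correct and follows essentially the same route as the paper: rescale the horocycle at each puncture $P_i$ by dividing its Euclidean size by $k_i^2$, observe how plain and notched ends respond to this change, and then deduce integrality from the divisibility statements of Remark~\ref{notched} together with the hypothesis $k_i\mid m_i$. Your write-up merely makes explicit some details the paper leaves implicit (the $\Gamma$-invariance of the rescaled horocycles and the computation with the conjugate horocycle), both of which are verified correctly.
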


      \begin{proof}
        Consider the frieze $\hat F$, and define the frieze $F$ as follows: we change the horocycles at punctures $P_i$ by dividing their Euclidean size by $k_i^2$, so all $\lambda$-lengths of arcs of $T$ change as required. We are left to show that $\lambda$-lengths of all other arcs remain integer.

        Consider any arc $\gamma$ in $S$ with endpoints in $P_i$ and $P_j$, its $\lambda$-length is not affected by changes of all horocycles except for the ones at $P_i$ and $P_j$.

        After change of the horocycle at $P_i$ only, the $\lambda$-lengths of all arcs tagged plain at $P_i$ are multiplied by $k_i$ (or $k_i^2$ for loops), and the $\lambda$-lengths of all notched arcs at $P_i$ are divided by $k_i$ (or $k_i^2$ for loops). Similarly, the same holds for the change of the horocycle at $P_j$. Now, all $\lambda$-lengths remain integer according to Remark~\ref{notched}.

        \end{proof}

        \begin{cor}
          \label{non-unitary}
          Let $S$ be a punctured marked surface different from once punctured digon or triangle, and from twice punctured monogon. Then there exists a non-unitary frieze on $S$. 

        \end{cor}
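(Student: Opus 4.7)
My plan is to apply Theorem~\ref{all-pts} and then verify that the resulting scaled frieze is non-unitary. The argument naturally splits into two parts: finding an ideal triangulation with a puncture of composite valence, and demonstrating that the scaled frieze cannot be unitary with respect to any other triangulation.

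First, I would handle the combinatorial input. For any punctured marked surface $S$ outside the three excluded ones, I claim there exists an ideal triangulation $T$ of $S$ with some puncture $P$ whose valence $m=\val_T(P)$ is composite (in particular, $m\ge 4$). This is a case-by-case verification: for surfaces with sufficiently many marked points or richer topology, a fan triangulation at $P$ using four or more surrounding marked points already yields valence $\ge 4$. For tight cases such as the twice-punctured digon, one can take a loop at one puncture $P_1$ encircling the other puncture $P_2$ together with a self-folded triangle inside, which produces valence $6$ at $P_1$. For a once-punctured $n$-gon with $n\ge 4$, the fan triangulation gives valence $n\ge 4$. The point of the hypothesis is that the three excluded surfaces are precisely those where every ideal triangulation forces all puncture valences to lie in $\{1,2,3\}$.

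Next, given such $T$, $P$, and a proper divisor $k$ with $1<k<m$, I apply Theorem~\ref{all-pts} with scaling constant $k$ at $P$ and $1$ at every other puncture to obtain a frieze $F$ from the unitary frieze $\hat F$. To verify $F$ is non-unitary, I argue by contradiction: suppose some triangulation $T'$ satisfies $F(\gamma)=1$ for every $\gamma\in T'$. Since every marked point lies on at least one arc of any triangulation, pick some $\gamma\in T'$ incident to $P$. The formula $F(\gamma)=k^{\varepsilon(\gamma)}\hat F(\gamma)=1$ combined with positivity and integrality of $\hat F(\gamma)$ forces the following dichotomy. If $\gamma$ is tagged plain at $P$ then $\hat F(\gamma)=k^{-\varepsilon(\gamma)}<1$ is not a positive integer; if $\gamma$ is tagged notched at $P$ then $\hat F(\gamma)\in\{k,k^2\}$, but Remark~\ref{notched} forces $m^{|\varepsilon(\gamma)|}$ to divide $\hat F(\gamma)$, yielding $m\mid k$, contradicting $k<m$. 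Hence no such $T'$ can exist.

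The main obstacle is the combinatorial step: one must check cleanly that every non-excluded punctured marked surface admits an ideal triangulation with a puncture of composite valence. While the three excluded surfaces can be analysed directly, giving a uniform construction covering all remaining cases (across genus, number of boundary components, number of punctures, and number of boundary marked points) requires some care, particularly for surfaces close to the excluded list where valences are naturally small and loops around additional punctures must be invoked to push a single puncture's valence above the prime threshold.
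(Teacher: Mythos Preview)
Your approach matches the paper's exactly: invoke Theorem~\ref{all-pts} after locating an ideal triangulation with a puncture of composite valence, and observe that the three excluded surfaces are precisely those where no such triangulation exists. The paper's own proof consists of just these two sentences and leaves both the combinatorial check and the non-unitarity of the scaled frieze entirely implicit.

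Your proposal is in fact more complete than the paper's: you supply an explicit argument that the frieze $F$ obtained by scaling with $1<k<m$ cannot be unitary. That argument (any arc of a putative unitary triangulation $T'$ incident to $P$ would have $F$-value at least $k$ if tagged plain, while if tagged notched Remark~\ref{notched} forces $m\mid k$) is correct. One small point to make precise in the notched-loop case: you need that $m^{2}\mid k^{2}$ implies $m\mid k$, which is of course true but worth stating since you phrased the conclusion as ``$m\mid k$'' directly. The combinatorial step you flag as the main obstacle is exactly what the paper dismisses with ``it is easy to see''; your outline of how to handle the borderline cases (fan at the puncture when enough marked points are available, and a loop enclosing a self-folded triangle otherwise) is the natural way to do it.
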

\begin{proof}
        According to Theorem~\ref{all-pts}, it is sufficient to find a triangulation with at least one puncture having a non-prime valence. It is easy to see that all punctured surfaces except for the ones mentioned in the statement admit such triangulations.

\end{proof}
        
\section{Friezes on bordered surfaces}
\label{sec_bordered}

In this section, we classify friezes on surfaces with non-empty boundary.

The main tool in our considerations is the following technical result.

\begin{prop}
  \label{divisor_boundary}
  Let $F$ be a frieze on a bordered surface $S$, let $P$ be a marked point, let $AB$ be a boundary arc (any of $A$, $B$ and $P$ may coincide). Suppose that there exists prime $p$ and arcs $\alpha$ and $\beta$ both tagged plain (resp., notched) with endpoints $P,A$ and $P,B$ respectively, such that $\a,\b$ and $AB$ form a triangle on $S$, and both $F(\a)$ and $F(\b)$ are divisible by $p$. Then  for every arc $\gamma$ tagged plain (resp., notched) at $P$ with the other endpoint at $A$ or $B$ the value $F(\gamma)$ is divisible by $p$.
  \end{prop}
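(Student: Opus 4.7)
The plan is to lift the configuration to the hyperbolic plane $\H^2$ and apply the Ptolemy relation $(\ref{eq_Ptolemy})$ to an ideal quadrilateral that incorporates the triangle $\alpha,\beta,AB$ together with a suitable lift of the arc $\gamma$. First, realise the decorated hyperbolic structure defined by $F$ (Section~\ref{decorated}) as the quotient of a domain in $\H^2$ by a discrete group $\Gamma$, and fix a lift of the triangle with $\widetilde P=\infty$, $\widetilde A=0$, $\widetilde B=1$ in the upper halfplane model; the horocycles at these three points are uniquely determined by $F(AB)=1$, $F(\alpha)$, and $F(\beta)$.

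Given an arc $\gamma$ tagged plain at $P$ with the other endpoint at $A$ (the $P$-to-$B$ case is symmetric, and the tagged-notched hypothesis is handled by replacing the horocycle at $\widetilde P$ by its conjugate, see Section~\ref{lambda}), lift $\gamma$ so that one endpoint coincides with $\widetilde P=\infty$, and denote the other endpoint by $\widetilde A'=x_\gamma\in\R$, a $\Gamma$-translate of $\widetilde A$. Unless $x_\gamma\in\{0,1,\infty\}$, in which case $\gamma$ is homotopic to $\alpha$ and the conclusion is trivial, the four ideal points $\infty,0,1,x_\gamma$ span an ideal quadrilateral. Its six inscribed geodesics consist of $\alpha,\beta,AB,\gamma$ together with two auxiliary geodesics $\sigma_1,\sigma_2$ joining $\widetilde A'$ to $\widetilde A$ and to $\widetilde B$ respectively. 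Writing the Ptolemy identity $(\ref{eq_Ptolemy})$ for this quadrilateral and using $F(AB)=1$, one arrives in each of the sub-cases $x_\gamma<0$, $0<x_\gamma<1$, $x_\gamma>1$ at an identity of the shape
\[
F(\gamma) \;=\; \pm\,F(\alpha)\,\lambda(\sigma_2)\;\pm\; F(\beta)\,\lambda(\sigma_1),
\]
where $\lambda(\sigma_1),\lambda(\sigma_2)\in\Z_{>0}$ are the $\lambda$-lengths of the auxiliary geodesics. The divisibility hypothesis $p\mid F(\alpha)$ and $p\mid F(\beta)$ then forces $p\mid F(\gamma)$.

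The main obstacle is twofold. First, one must verify the cyclic ordering of the four ideal points on $\partial\H^2$ in each of the three sub-cases in order to identify the correct sides and diagonals, and one must handle the degenerate configurations where two or three of $P,A,B$ coincide (so that, for instance, $AB$ becomes a boundary loop or $\alpha$ is itself a loop at $P$). In every such case the lifts $\widetilde P,\widetilde A,\widetilde B$ remain distinct points of $\partial\H^2$ and the quadrilateral construction carries through. Second, the projections of $\sigma_1$ and $\sigma_2$ to $S$ may fail to be simple tagged arcs, so that $\lambda(\sigma_1),\lambda(\sigma_2)$ are not immediately values of the frieze $F$; their integrality is the most substantive technical point, and is established by expanding these $\lambda$-lengths through iterated Ptolemy in an auxiliary triangulation of $S$ in which all arc values are known to be integer.
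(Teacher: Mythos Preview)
Your approach is correct and takes a genuinely different route from the paper. The paper proceeds by induction on the number of intersection points of $\gamma$ with the sides of the triangle $\Delta=(\alpha,\beta,AB)$: at each stage it constructs an auxiliary triangle on $S$ with \emph{simple} sides whose $\lambda$-lengths are already known to be divisible by $p$, and then reapplies the argument to the new triangle (Lemmas~\ref{0}--\ref{many}). Along the way it first shows (Lemma~\ref{nok}) that $P$ must be a puncture, and in the sub-case $A=B$ of the one-intersection step (Lemma~\ref{1}) it only obtains $p\mid\lambda_\gamma^{2}$, which is where primality of $p$ is actually used. Your one-shot Ptolemy in the cover bypasses both of these features: no induction on intersection number is needed, and primality never enters, so your argument in fact yields the statement for an arbitrary integer $k>1$ (cf.\ Remark~\ref{prime}).

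The trade-off is that the whole argument is loaded onto the integrality of $\lambda(\sigma_1)$ and $\lambda(\sigma_2)$, whose projections to $S$ are in general non-simple (based loops at $A$, or self-intersecting arcs from $A$ to $B$). Your justification---``expanding these $\lambda$-lengths through iterated Ptolemy in an auxiliary triangulation''---is not quite enough on its own: resolving self-intersections via skein relations produces, in general, products involving simple closed curves, and one then needs that the trace-type invariants of closed curves are themselves integer polynomials in $\lambda$-lengths of simple arcs. This holds on bordered surfaces but is not elementary; the paper invokes~\cite{MW,FT} for the skein step and~\cite[Section~8]{MSW},~\cite[Section~5]{CLS} for the closed-curve step when it needs the same fact (in the proof of Lemma~\ref{nok}). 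With those references supplied your argument is complete and arguably cleaner; the paper's induction, by contrast, is closer to self-contained, since it confines the appeal to non-simple arcs to a single preliminary lemma and otherwise works entirely with simple arcs whose $\lambda$-lengths are values of $F$ by definition.
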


We split the proof of  Proposition~\ref{divisor_boundary} into Lemmas~\ref{0}--\ref{many}.

  \begin{lemma}
    \label{0}
In the assumptions of Proposition~\ref{divisor_boundary}, let $\eta$ be any arc incident to $P$ and tagged at $P$ in the same way as $\a$ and $\b$. Suppose that $\eta$ intersects neither $\a$ nor $\b$. Then $F(\eta)$ is divisible by $p$.
    
    \end{lemma}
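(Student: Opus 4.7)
The plan is to reduce the claim to a single Ptolemy identity applied to an ideal quadrilateral in $\H^2$ obtained by unfolding a short chain of triangles of $T$.

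First I would extend $\{\alpha,\beta,AB,\eta\}$ to a tagged triangulation $T$ of $S$ (possible because $\eta$ crosses neither $\alpha$ nor $\beta$) and list the arcs of $T$ at $P$ lying on the side of $\alpha\cup\beta$ opposite to the triangle $PAB$, tagged at $P$ as $\alpha,\beta,\eta$ are, in cyclic order around $P$ as $\gamma_0=\alpha,\gamma_1,\ldots,\gamma_n=\beta$, so that $\eta=\gamma_k$ for some $1\le k\le n-1$. Write $X_i$ for the other endpoint of $\gamma_i$ (thus $X_0=A$, $X_n=B$) and $e_i$ for the third edge of the triangle $(\gamma_i,\gamma_{i+1},e_i)$ of $T$.

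Next I would focus on the chain $C\subset S$ consisting of the triangle $PAB$ together with the fan triangles $(\gamma_{i-1},\gamma_i,e_{i-1})$ for $i=1,\ldots,k$. The subcomplex $C$ is a topological disk whose boundary cycle reads $P,B,A,X_1,\ldots,X_k$; in particular it is simply connected, with $P$ on its boundary. Passing to the decorated hyperbolic structure determined by $F$, the simple-connectedness of $C$ lets the developing map lift $C$ injectively into $\H^2$, producing an ideal polygon $\Pi$ with vertices $\tilde P,\tilde B,\tilde A=\tilde X_0,\tilde X_1,\ldots,\tilde X_k$ in cyclic order on $\partial\H^2$ and horocycles prescribed by the decoration. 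In particular $\lambda_{\tilde P\tilde A}=F(\alpha)$, $\lambda_{\tilde P\tilde B}=F(\beta)$, $\lambda_{\tilde A\tilde B}=F(AB)=1$, and $\lambda_{\tilde P\tilde X_k}=F(\eta)$.

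Applying the Ptolemy relation~(\ref{eq_Ptolemy}) to the ideal quadrilateral $\tilde P\tilde B\tilde A\tilde X_k\subset\Pi$ yields
$$F(\alpha)\,\lambda_{\tilde B\tilde X_k}\;=\;F(\beta)\,\lambda_{\tilde A\tilde X_k}\;+\;F(\eta).$$
The chords $\tilde B\tilde X_k$ and $\tilde A\tilde X_k$ of $\Pi$ project, via the injective lift, to embedded curves inside $C$, i.e.\ to arcs $\nu$ and $\mu$ of $S$, so $\lambda_{\tilde B\tilde X_k}=F(\nu)$ and $\lambda_{\tilde A\tilde X_k}=F(\mu)$ are positive integers. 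Rearranging gives $F(\eta)=F(\alpha)F(\nu)-F(\beta)F(\mu)$, and since $p$ divides both $F(\alpha)$ and $F(\beta)$, it follows that $p\mid F(\eta)$. For the notched case one replaces the horocycle at $P$ with its conjugate $\bar h_P$ and repeats the argument; $F(\mu)$ and $F(\nu)$ are unchanged because neither $\mu$ nor $\nu$ has an endpoint at $P$.

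The main subtlety will be the geometric step — justifying injectivity of the lift of $C$, so that the chords $\tilde A\tilde X_k$ and $\tilde B\tilde X_k$ correspond to genuine arcs of $S$ with integer $F$-values. Working with the chain $C$ rather than the entire star of $P$ is essential here: when $P$ is a puncture the full star does not admit an injective lift because of the parabolic monodromy around $\tilde P$, whereas $C$ is a topological disk (with $P$ on its boundary, not in its interior) regardless of whether $P$ is a boundary marked point or a puncture.
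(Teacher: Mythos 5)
Your proof is correct and follows essentially the same route as the paper: both apply the Ptolemy relation to the ideal quadrilateral on $P$, $A$, $B$ and the second endpoint $Q$ of $\eta$, using $F(AB)=1$ and the divisibility of $F(\alpha)$ and $F(\beta)$ to conclude $p\mid F(\eta)$. The only difference is presentational — the paper builds the fourth side directly on the surface by concatenating $\beta$ with $\eta$ (and notes the small adjustment needed when $P$ coincides with $A$ or $B$), whereas you obtain the same quadrilateral by developing a fan of triangles into $\H^2$; your extra care about the injective lift is sound but not needed for the identity.
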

  
    \begin{proof}
      Take an arbitrary arc $\eta$ on $S$ tagged plain (resp. notched) at $P$, $\eta$ does not intersect the triangle $ABP$. Denote the other endpoint of $\eta$ by $Q$ ($Q$ and $P$ may coincide). Consider a quadrilateral with sides $\eta$, $\a$ and $AB$ and diagonal $\beta$ (it can be constructed as follows: the forth side $BQ$ follows $\b$ and then $\eta$, where $BQ$ is tagged at $Q$ in the same way as $\eta$, see Fig.~\ref{fig-0}, left; note that if $P$ coincides with $A$ or $B$ one may need to switch $\a$ and $\b$ in the construction). If $\lambda_{\a}$ and $\lambda_{\b}$ are divisible by $p$, then by the Ptolemy relation~(\ref{eq_Ptolemy}) $\lambda_{\eta}\lambda_{AB}$ is also divisible by $p$, which implies the lemma as  $\lambda_{AB}=1$.

      \end{proof}

\begin{figure}[!h]
\begin{center}
 \epsfig{file=./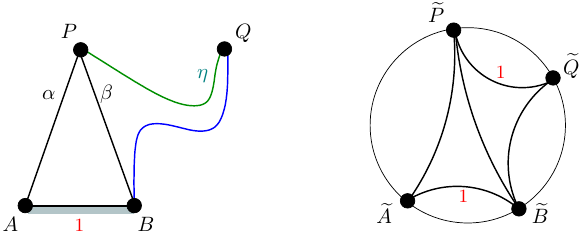,width=0.63\linewidth}
  \caption{To the proof of Lemmas~\ref{0} (left) and~\ref{nok} (right). 
  }
\label{fig-0}
\end{center}
\end{figure}

\begin{lemma}
  \label{nok}
In the assumptions of Proposition~\ref{divisor_boundary}, $P$ must be a puncture. In particular, $P\ne A,B$.  

  \end{lemma}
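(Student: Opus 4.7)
My plan is to reduce the lemma to a single application of Lemma~\ref{0}, by exhibiting a concrete arc $\eta$ at $P$ whose existence is incompatible with the conclusion of that lemma. The statement has two parts: that $P$ must be a puncture and, as a consequence, that $P\ne A,B$. The second part will follow immediately from the first, because $A$ and $B$ are endpoints of the boundary arc $AB$ and hence lie on $\partial S$, so they cannot coincide with a puncture. So I will focus on showing $P$ is a puncture.

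I will split on the common tag of $\a,\b$ at $P$. If $\a$ and $\b$ are both tagged notched at $P$, then $P$ is automatically a puncture: by the convention recalled in Section~\ref{tagged}, all tags at boundary marked points are plain, so a notched tag at $P$ forces $P\notin\partial S$. Thus the only case requiring actual work is the one in which $\a,\b$ are both tagged plain at $P$.

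In that case I will argue by contradiction, assuming $P\in\partial S$, and derive the impossible divisibility $p\mid 1$. Since $P$ lies on some boundary component, I can choose a boundary arc $\eta$ incident to $P$ (possibly a boundary loop, if $P$ is the unique marked point on its component). Such an $\eta$ is tagged plain at $P$, has $F(\eta)=1$, and is disjoint from every interior arc of $S$. Because $p$ is prime and $F(\a),F(\b)$ are both divisible by $p$, while $F$ equals $1$ on every boundary arc, neither $\a$ nor $\b$ can itself be a boundary arc; hence $\a$ and $\b$ are interior, and $\eta$ intersects neither. Lemma~\ref{0} then applies to $\eta$ and yields $p\mid F(\eta)=1$, a contradiction.

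The only potential obstacle I anticipate is the degenerate subcase where the only available boundary arc at $P$ is a loop (so the far endpoint $Q$ of $\eta$ equals $P$). This does not actually cause any difficulty: Lemma~\ref{0} is stated so that $Q$ may coincide with $P$, and its quadrilateral/Ptolemy construction goes through unchanged. Beyond this sanity check the argument is extremely short, relying on just two ingredients — that tags on the boundary are plain, and that $F\equiv 1$ on boundary arcs — together with the already-proven Lemma~\ref{0}.
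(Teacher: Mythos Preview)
Your proof is correct and follows the same idea as the paper's: both derive a contradiction by applying Lemma~\ref{0} to a boundary arc incident to $P$. The only difference is that the paper treats the subcase $P\in\{A,B\}$ separately, essentially unfolding Lemma~\ref{0} in the universal cover with an added justification that the $\lambda$-lengths of the possibly self-intersecting auxiliary arcs are still integers, whereas you invoke Lemma~\ref{0} uniformly (which is legitimate, since its statement explicitly allows $P$ to coincide with $A$ or $B$).
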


  \begin{proof}
    Suppose that $P$ is  a boundary marked point. If $P$ is distinct from $A$ and $B$, then Lemma~\ref{0} implies that the $\lambda$-length of any boundary arc incident to $P$ is divisible by $p$, which leads to a contradiction. So, we may assume that $P$ coincides with $A$ or $B$, say $P=A$.

    Consider $S$ as a quotient of a domain in the hyperbolic plane $\H^2$, choose one preimage of the triangle $ABP$ and denote its vertices by $\tilde A,\tilde B,\tilde P$. Since $A=P$, there is a preimage of the boundary arc $AB$ incident to $\tilde P$, denote its other endpoint by $\tilde Q$. One of the geodesics $\tilde Q\tilde A$ and $\tilde Q\tilde B$ (say, $\tilde Q\tilde B$) does not intersect the triangle $\tilde A\tilde B\tilde P$  (see Fig.~\ref{fig-0}, right), so there is an ideal quadrilateral in $\H^2$ with opposite sides $\tilde A\tilde B$ and $\tilde P\tilde Q$ of unit $\lambda$-lengths.

    Observe that the images of sides $\tilde A\tilde Q$ and $\tilde B\tilde Q$ on $S$ may self-intersect. However, using the Ptolemy relation, it is easy to see that $\lambda$-lengths of self-intersecting arcs can be written as sums and products of $\lambda$-lengths of non-self-intersecting arcs and simple closed curves (see~\cite{MW,FT}). In view of~\cite[Section 8]{MSW} and~\cite[Section 5]{CLS}, the $\lambda$-lengths of simple closed curves can be written as sums and products of $\lambda$-lengths of non-self-intersecting arcs. Therefore, the $\lambda$-lengths of all non-self-intersecting arcs are integers.

    We can now apply the Ptolemy relation to the quadrilateral with vertices $\tilde A,\tilde B,\tilde P,\tilde Q$. The opposite sides $\tilde A\tilde B$ and $\tilde P\tilde Q$ have unit $\lambda$-lengths, and the $\lambda$-lengths of the side  $\tilde A\tilde P$ and the diagonal  $\tilde B\tilde P$ are divisible by $p>1$, so we get a contradiction.

    \end{proof}

  The further proof of Proposition~\ref{divisor_boundary} is by induction on the number of intersections of $\gamma$ with the sides of the triangle $ABP$. Lemma~\ref{0} serves as the base of the induction.

    \begin{lemma}
      \label{1}
In the assumptions of Proposition~\ref{divisor_boundary}, let $\gamma$ intersect one of $\a$ or $\b$ precisely once (and not intersect the other). Then $F(\gamma)$ is divisible by $p$.      

\end{lemma}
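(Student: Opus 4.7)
The plan is to imitate the proof of Lemma~\ref{0} but with an ideal quadrilateral adapted to the single intersection of $\gamma$ with $\beta$. Without loss of generality I assume $\gamma$ runs from $P$ to $A$ and crosses $\beta$ (not $\alpha$); the other cases follow by the symmetries $A\leftrightarrow B$ and $\alpha\leftrightarrow\beta$. Represent $S$ as a quotient of a domain in $\H^2$ by a discrete group, fix a lift $\tilde P,\tilde A,\tilde B$ of the triangle $PAB$, and lift $\gamma$ to the geodesic starting at $\tilde P$; denote its other endpoint by $\tilde A'$. By Lemma~\ref{nok}, $P$ is a puncture distinct from $A$ and $B$, and $\tilde A'\ne\tilde A$ because $\gamma$ is not homotopic to $\alpha$ (if $\tilde A'=\tilde B$ then $\gamma\simeq\beta$ and the conclusion is immediate), so the four ideal points $\tilde P,\tilde A,\tilde B,\tilde A'$ are pairwise distinct.

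Now apply the Ptolemy relation~(\ref{eq_Ptolemy}) to the ideal quadrilateral with these four vertices. The three partitions of $\{\tilde P,\tilde A,\tilde B,\tilde A'\}$ into two pairs produce the three pairs of geodesics
\begin{equation*}
\{\tilde P\tilde A',\,\tilde A\tilde B\}=\{\gamma,AB\},\quad
\{\tilde P\tilde A,\,\tilde A'\tilde B\}=\{\alpha,\mu\},\quad
\{\tilde P\tilde B,\,\tilde A\tilde A'\}=\{\beta,\nu\},
\end{equation*}
where $\mu$ is the image of $\tilde B\tilde A'$ (an arc from $B$ to $A$) and $\nu$ is the image of $\tilde A\tilde A'$ (a loop at $A$). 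Whichever of these three pairs supplies the diagonals of the quadrilateral, Ptolemy yields an identity of the shape
\begin{equation*}
F(\gamma)\cdot F(AB)\;=\;\pm F(\alpha)\cdot F(\mu)\pm F(\beta)\cdot F(\nu),
\end{equation*}
the signs being determined by which pair is the diagonal. Since $F(AB)=1$, $p\mid F(\alpha)$, and $p\mid F(\beta)$, once we know that $F(\mu)$ and $F(\nu)$ are integers we conclude $p\mid F(\gamma)$. Integrality of $F(\mu),F(\nu)$ holds even when these curves fail to be simple in $S$: by the skein-expansion argument already invoked in the proof of Lemma~\ref{nok}, $\lambda$-lengths of any (possibly self-intersecting) arcs on $S$ are $\Z$-polynomials in $\lambda$-lengths of simple arcs, all of which are integers by assumption.

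For the notched case, where $\alpha,\beta,\gamma$ are all tagged notched at $P$, the same computation is carried out in the decorated structure obtained by replacing the horocycle at $P$ by its conjugate: then $F(\alpha),F(\beta),F(\gamma)$ are precisely the tagged $\lambda$-lengths, while $F(AB),F(\mu),F(\nu)$ are unchanged since $AB,\mu,\nu$ do not touch $P$, so the Ptolemy identity and its divisibility consequence survive. The main thing to verify carefully is the sign bookkeeping across the three possible cyclic orders of $\tilde P,\tilde A,\tilde B,\tilde A'$ on $\partial\H^2$; in every case $F(\gamma)$ appears in the term paired with $F(AB)=1$, while the other two Ptolemy terms carry a factor of $F(\alpha)$ or $F(\beta)$, so the divisibility by $p$ is uniform.
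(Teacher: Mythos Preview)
Your argument is correct and takes a route quite different from the paper's. The paper splits into two cases. When $A\ne B$ it passes to a neighbouring boundary arc $BC$, builds a new triangle $\Delta'$ with side $BC$ and vertex $P$ whose other two sides avoid $\Delta$, and applies Lemma~\ref{0} twice (once to see the sides of $\Delta'$ are divisible by $p$, once more with $\Delta'$ in place of $\Delta$ to handle $\gamma$). When $A=B$ it constructs auxiliary loops $\delta$ (at $P$) and $\psi$ (at $A$) bounding an annulus, and a flip yields $\lambda_\alpha\lambda_{\alpha'}=\lambda_\psi\lambda_\delta+\lambda_\gamma^2$; from $p\mid\lambda_\alpha,\lambda_\delta$ this gives only $p\mid\lambda_\gamma^2$, so primality of $p$ is genuinely used there.

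Your single Ptolemy relation on the lifted ideal quadrilateral $\tilde P\tilde A\tilde B\tilde A'$, together with the skein argument for integrality of the possibly non-simple curves $\mu,\nu$, handles both cases at once and never uses that $p$ is prime. In fact nothing in your computation uses the hypothesis that $\gamma$ meets $\alpha\cup\beta$ exactly once --- the only input is $\gamma\notin\{\alpha,\beta\}$, needed to make the four ideal points distinct (and those two excluded cases are the hypothesis itself). So your argument already yields the full Proposition~\ref{divisor_boundary} without the induction of Lemmas~\ref{0}--\ref{many}, and in particular shows that the proposition holds for an arbitrary integer divisor $k>1$ even when $A=B$, sharpening what Remark~\ref{prime} records about the paper's own proof. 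The trade-off is that your approach leans on the skein/resolution machinery of~\cite{MW,FT,MSW,CLS} (already invoked in Lemma~\ref{nok}) to secure integrality of $\lambda_\mu,\lambda_\nu$, whereas the paper's $A\ne B$ argument stays entirely with simple arcs on $S$. One small imprecision: there are infinitely many lifts of $\gamma$ based at $\tilde P$, not a unique one; but any choice works for your divisibility conclusion, so this does not affect the proof.
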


\begin{proof}
  We need to consider two cases: either $A$ and $B$ coincide or not. Denote the triangle formed by $\a,\b$ and $AB$ by $\Delta$.

  Suppose first that $A$ and $B$ are distinct, assume $\gamma$ intersects $\b$ (and thus $\gamma$ is incident to $A$, see Fig.~\ref{fig-1}, left).  Consider a boundary arc $BC$, and define two new arcs $BP$ and $CP$ as follows. The arc $BP$ goes from $B$ along $\b$ till the intersection with $\gamma$, and then follows $\gamma$ till $P$. The arc $CP$ follows $CB$ and then $BP$. Then the arcs $BP$ and $CP$ do not intersect the triangle $\Delta$, and thus, by Lemma~\ref{0}, their $\lambda$-lengths are divisible by $p$. Moreover, $BP$, $CP$ and the boundary arc $BC$ form a triangle $\Delta'$, and $\gamma$ does not intersect $\Delta'$. Applying Lemma~\ref{0} again, we see that $\lambda_\gamma$ is divisible by $p$.

  Consider now the case $A=B$, assume again that $\gamma$ intersects $\b$. We introduce two additional arcs $\delta$ and $\psi$ as follows (see Fig.~\ref{fig-1}, right). The arc $\delta$ starts at $P$ and goes along $\b$ till the intersection with $\gamma$, and then follows $\gamma$ back to $P$. The arc $\psi$ starts at $A$ and goes along $\a$ and $\gamma$ avoiding an intersection with $\delta$, see Fig.~\ref{fig-1}, right. Now the loops $\delta$ and $\psi$ bound an annulus consisting of two triangles: one has sides $\a,\gamma,\delta$, and the other with sides $\a, \gamma,\psi$. Making a flip in $\a$, we obtain another arc $\a'$ with $$\lambda_{\a}\lambda_{\a'}=\lambda_\psi\lambda_\delta+\lambda_\gamma^2.$$
  Observe that $\lambda_\delta$ is divisible by $p$ by Lemma~\ref{0}, and  $\lambda_{\a}$ is divisible by $p$ by the assumption of Prop.~\ref{divisor_boundary}. Therefore, $\lambda_\gamma^2$ is divisible be $p$, and thus $\lambda_\gamma$ is divisible by $p$ since $p$ is prime.   

      \end{proof}

\begin{figure}[!h]
\begin{center}
 \epsfig{file=./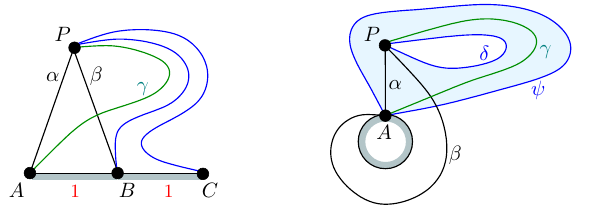,width=0.8\linewidth}
  \caption{To the proof of Lemma~\ref{1}: $\gamma$ intersects $\a$ or $\b$ once. Left: points $A$ and $B$ are distinct; right: $A$ coincides with $B$.
  }
  
\label{fig-1}
\end{center}
\end{figure}

The following lemma completes the proof of Proposition~\ref{divisor_boundary}.      

   \begin{lemma}
      \label{many}
In the assumptions of Proposition~\ref{divisor_boundary}, let the total number of intersections of $\gamma$ with $\a$ and $\b$ be bigger than one. Then $F(\gamma)$ is divisible by $p$.      

\end{lemma}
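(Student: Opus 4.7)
I would prove Lemma~\ref{many} by strong induction on $n := |\gamma \cap (\a \cup \b)|$, with base case $n=1$ supplied by Lemma~\ref{1}. The inductive step, for $n \geq 2$, follows the strategy of Lemma~\ref{1}: from a carefully chosen intersection of $\gamma$ with $\a$ or $\b$, I build a new triangle $\Delta^\ast$ with vertex $P$, new legs $\a^\ast, \b^\ast$, and a new boundary side adjacent to $AB$, such that (i) $F(\a^\ast)$ and $F(\b^\ast)$ remain divisible by $p$, and (ii) $\gamma$ crosses $\a^\ast \cup \b^\ast$ strictly fewer than $n$ times. A further appeal to the inductive hypothesis, now with $\Delta^\ast$ in place of $\Delta$, then yields $p \mid F(\gamma)$.

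Without loss of generality $\gamma$ meets $\b$, and I split on whether $A \ne B$ or $A = B$. If $A \ne B$, let $x$ be the intersection of $\gamma$ and $\b$ closest to $B$ along $\b$, and let $BC$ be a boundary arc adjacent to $AB$ at $B$; following the Lemma~\ref{1} recipe, I set $\b^\ast = BP$ (along $\b$ from $B$ to $x$, then along $\gamma$ back to $P$) and $\a^\ast = CP$ ($CB$ followed by $\b^\ast$). The choice of $x$ closest to $B$ keeps the initial $\b$-segment free of further intersections with $\gamma$, so that in minimal position both $\b^\ast$ and $\a^\ast$ meet $\a \cup \b$ in at most $n-1$ points; the inductive hypothesis (and Lemma~\ref{0} when that count is zero) then gives $p \mid F(\b^\ast)$ and $p \mid F(\a^\ast)$. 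The new triangle $\Delta^\ast = (\a^\ast, \b^\ast, BC)$ is admissible, $\gamma$ crosses $\a^\ast \cup \b^\ast$ at most $n-1$ times, and a final inductive step closes the argument. When $A = B$, I would reuse the annular construction from the second half of the proof of Lemma~\ref{1}: cut $\gamma$ at a well-chosen intersection with $\b$, build loops $\delta$ and $\psi$ as there, and apply the flip relation $\lambda_\a \lambda_{\a'} = \lambda_\psi \lambda_\delta + \lambda_\gamma^2$; the inductive hypothesis controls $\delta$ and $\psi$, making the right-hand side divisible by $p$, and primality of $p$ forces $p \mid \lambda_\gamma$.

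The main obstacle is the minimal-position bookkeeping: one has to verify that the geodesic representatives of the freshly concatenated arcs $\b^\ast, \a^\ast$ (and in the $A=B$ case, $\delta$ and $\psi$) really do have strictly fewer transverse crossings with $\a \cup \b$ than $\gamma$ does. A secondary subtlety is that $\a^\ast = CP$ has its second endpoint at $C \notin \{A, B\}$, so Proposition~\ref{divisor_boundary} does not directly cover it; I would resolve this either by strengthening the inductive statement to arcs with arbitrary endpoint (retaining the intersection-count hypothesis, in the spirit of Lemma~\ref{0}), or via a Ptolemy relation on a suitable ideal quadrilateral with vertices $P, B, C$ and an auxiliary ideal point, expressing $F(\a^\ast)$ in terms of $F(\b^\ast)$ and $\lambda$-lengths of arcs with fewer crossings of $\a \cup \b$.
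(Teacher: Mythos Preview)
Your inductive strategy is the paper's, but your implementation deviates at precisely the point that creates the gap you flag. The paper does \emph{not} move to an adjacent boundary arc $BC$; it keeps the same base $AB$ and builds new legs $AP$ and $BP$ on it. Because both new legs end at $A$ or $B$, the inductive hypothesis applies to them directly, and your ``secondary subtlety'' simply never arises. Concretely, the paper orients $\gamma$ towards $P$, picks $M$ among the \emph{entry} points of $\gamma$ into $\Delta$ for which the segment of the relevant side between $M$ and $AB$ is (essentially) free of $\gamma$-crossings, sets the new $AP$ to run along $\a$ from $A$ to $M$ and then along $\gamma$ to $P$, and sets $BP = BA \cdot AP$. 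Each of these meets $\a \cup \b$ at most $n-1$ times, so induction gives $p \mid F(AP), F(BP)$; and by the choice of $M$ the arc $\gamma$ meets the \emph{new} triangle at most once, so Lemmas~\ref{0} and~\ref{1} finish. This single construction handles $A=B$ and $A\neq B$ uniformly.

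Your route, by contrast, requires divisibility for $\a^\ast = CP$ in the $A\neq B$ case and for the loop $\delta$ at $P$ in the $A=B$ case; neither has its non-$P$ end at $A$ or $B$, so neither Proposition~\ref{divisor_boundary} nor its inductive shadow covers them. Your proposed fix (a) --- strengthening Lemmas~\ref{1} and~\ref{many} to arcs with arbitrary second endpoint, including loops at $P$ --- can in principle be carried out, but it is a genuine extra argument rather than bookkeeping (Lemma~\ref{1} for loops at $P$ is not immediate); fix (b) is too vague to assess. The far shorter repair is to stay on $AB$: replace $BC$ by $AB$ in your construction, so that $\a^\ast = AB \cdot \b^\ast$ ends at $A$, and drop the separate $A=B$ annular argument in favour of the uniform construction above.
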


\begin{proof}
  Denote the total number of intersection points of $\gamma$ with $\a$ and $\b$ by $n$. Our aim is to construct a triangle with the same side $AB$ and vertex $P$ such that each of the sides $AP$ and $BP$ intersect $\a$ and $\b$ in less than $n$ points, and the total number of intersections of $\gamma$ with $AP$ and $BP$ is also less than $n$. By the induction assumption, this will imply that $\lambda_{AP}$ and $\lambda_{BP}$ are divisible by $p$, and thus we can apply the induction assumption to the new triangle to conclude that $\lambda_\gamma$ is also divisible by $p$.

  Denote the triangle formed by $\a,\b$ and $AB$ by $\Delta$. Orient $\gamma$ towards $P$, and consider the set $\Sigma$ of all intersection points of $\gamma$ with $\a$ and $\b$ in which $\gamma$ enters the triangle $\Delta$. Since $\gamma$ is not self-intersecting, either there exists $M\in\Sigma$ such that the segment of the side of $\Delta$ between $M$ and $AB$ does not contain any intersection points with $\gamma$ (see Fig.~\ref{fig-many}, left and right), or, otherwise, there exists $M\in \Sigma$ such that the segment of the side of $\Delta$ between $M$ and $AB$ contains a unique intersection point with $\gamma$, and it does not belong to $\Sigma$ (see Fig.~\ref{fig-many}, middle). Note that in the latter case there might be two such points, we choose the one lying on the segment of $\gamma\cap\Delta$ that is closer to $AB$.

  By symmetry, we can assume that in the former case $M\in\a$, and in the latter case $\gamma$ is incident to $A$. We can now define the sides of new triangle $\Delta'$. In the former case its side $AP$ goes from $A$ along $\a$ till $M$ and then goes along $\gamma$ till $P$, and $BP$ goes along $BA$ and then $AP$ (see Fig.~\ref{fig-many}, left and right). In the latter case $BP$ goes from $B$ along $\b$ till $M$ and then goes along $\gamma$ till $P$, and $AP$ goes along $AB$ and then $BP$ (see Fig.~\ref{fig-many}, middle).

  The total number of intersections of $AP$ with $\a$ and $\b$ is at most $n-1$, and the same holds for $BP$, so their $\lambda$-lengths are divisible by $p$. The arc $\gamma$ intersects the sides of the new triangle $\Delta'$ at most once, and thus its $\lambda$-length is also divisible by $p$ by Lemmas~\ref{0} and~\ref{1}.

      \end{proof}

\begin{figure}[!h]
\begin{center}
  \epsfig{file=./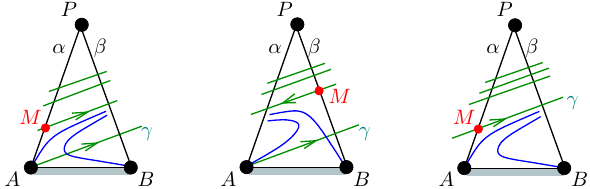,width=0.89\linewidth}
  \caption{To the proof of Lemma~\ref{many}: reducing the number of intersections.
  }
\label{fig-many}
\end{center}
\end{figure} 

\begin{remark}
  \label{prime}
The only place in the proof of Prop.~\ref{divisor_boundary} where we used that $p$ is prime is the proof of Lemma~\ref{1} (and in the case of $A=B$ only). In particular, Lemmas~\ref{0} and~\ref{nok} hold for arbitrary integer $k>1$. Also, Proposition~\ref{divisor_boundary} holds for every integer $k>1$ if $A\ne B$. 
  \end{remark}

      With  Proposition~\ref{divisor_boundary} in hands, we can now prove a stronger statement.

\begin{prop}
  \label{divisor_all}
  In the assumptions of Proposition~\ref{divisor_boundary}, for every arc $\gamma$ tagged plain (resp., notched) at $P$ the value $F(\gamma)$ is divisible by $p$ (and by $p^2$ for loops at $P$).  
  \end{prop}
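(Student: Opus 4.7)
The plan is to reduce Proposition~\ref{divisor_all} to Proposition~\ref{divisor_boundary} via Ptolemy relations in the universal cover. Work throughout in a $\Gamma$-invariant decorated lift of $S$ to $\H^2$; at any lift $\t P^*$ of $P$ we use the standard horocycle if $\a,\b$ are tagged plain at $P$, and the conjugate one if they are tagged notched (the two cases are completely parallel). Every lift $\t P^*$ carries a canonical lift $\t A^*\t P^*\t B^*$ of the triangle with sides $\a,\b,AB$, so $\lambda_{\t P^*\t A^*}=\lambda_\a$ and $\lambda_{\t P^*\t B^*}=\lambda_\b$ are divisible by $p$, while $\lambda_{\t A^*\t B^*}=1$.

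The key intermediate statement is the following main lemma: for any lift $\t P^*$ of $P$ and any $\t Y\in\partial\H^2$ that is a lift of a marked point of $S$, $\lambda_{\t P^*\t Y}$ is divisible by $p$. To prove it, apply the Ptolemy relation to the four ideal points $\t A^*,\t P^*,\t B^*,\t Y$. Depending on which of the three open arcs of $\partial\H^2\setminus\{\t A^*,\t P^*,\t B^*\}$ contains $\t Y$, one of three cyclic orderings arises; in each case, isolating $\lambda_{\t P^*\t Y}$ and using $\lambda_{\t A^*\t B^*}=1$ gives
\[
\lambda_{\t P^*\t Y}=\pm\lambda_\a\cdot\lambda_{\t B^*\t Y}\pm\lambda_\b\cdot\lambda_{\t A^*\t Y}.
\]
The factors $\lambda_{\t A^*\t Y},\lambda_{\t B^*\t Y}$ are $\lambda$-lengths between lifts of marked points on $S$; their projections may self-intersect or be closed curves, but the $\lambda$-lengths are still integers by the argument recalled in the proof of Lemma~\ref{nok} (see \cite{MW,FT,MSW,CLS}). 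Since $p\mid\lambda_\a,\lambda_\b$, we conclude $p\mid\lambda_{\t P^*\t Y}$.

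For a non-loop arc $\gamma$ tagged at $P$ as in the hypothesis with the other endpoint $Q\ne P$, the equality $F(\gamma)=\lambda_{\t P\t Q}$ together with the main lemma directly gives $p\mid F(\gamma)$. For a loop $\gamma$ at $P$, lift it to a geodesic $\t P\t P'$ between two distinct preimages of $P$ and apply Ptolemy to the four ideal points $\t A,\t P,\t B,\t P'$. The same three-case analysis, after isolating $\lambda_\gamma$, yields
\[
\lambda_\gamma=\pm\lambda_\a\cdot\lambda_{\t B\t P'}\pm\lambda_\b\cdot\lambda_{\t A\t P'}.
\]
Now the main lemma applied at the lift $\t P'$ (with the canonical triangle lift at $\t P'$, taking $\t Y=\t A$ or $\t Y=\t B$) gives $p\mid\lambda_{\t A\t P'}$ and $p\mid\lambda_{\t B\t P'}$. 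Combined with $p\mid\lambda_\a,\lambda_\b$, each term on the right is divisible by $p^2$, hence $p^2\mid F(\gamma)$.

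The only delicate point is the three-way case analysis in each application of the Ptolemy relation: one must check that after rearrangement the $\lambda$-length one wishes to bound appears with coefficient $\lambda_{\t A^*\t B^*}=1$ rather than $\lambda_\a$ or $\lambda_\b$. A direct inspection of the three possible cyclic orderings of four ideal points on $\partial\H^2$ confirms that in every arrangement the edge $\t A^*\t B^*$ lies opposite to the edge carrying the target $\lambda$-length (either as a diagonal or as a side of the Ptolemy quadrilateral), so the displayed identities hold verbatim up to sign.
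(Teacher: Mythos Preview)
Your proof is correct and takes a genuinely different, more streamlined route than the paper.

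The paper first establishes Proposition~\ref{divisor_boundary} via an induction on the number of crossings of $\gamma$ with the triangle (Lemmas~\ref{0}--\ref{many}), and then deduces Proposition~\ref{divisor_all} by constructing, \emph{on the surface} $S$, an embedded quadrilateral with opposite sides $PQ$ and $AB$; the two arcs $AP$ and $BP$ appearing as a side and a diagonal of that quadrilateral have an endpoint on the boundary segment $AB$, so the paper \emph{invokes the conclusion} of Proposition~\ref{divisor_boundary} to get $p\mid\lambda_{AP},\lambda_{BP}$ before applying Ptolemy.

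Your argument bypasses the conclusion of Proposition~\ref{divisor_boundary} entirely: you apply Ptolemy directly in $\H^2$ to the four ideal points $\t A^*,\t P^*,\t B^*,\t Y$, where the ``long'' legs $\t A^*\t Y$ and $\t B^*\t Y$ are not required to project to embedded arcs --- you only need that their $\lambda$-lengths are integers, which is exactly the skein/resolution fact recalled in the proof of Lemma~\ref{nok}. Since $\t A^*\t B^*$ and $\t P^*\t Y$ share no vertex, they always form an opposite pair in the Ptolemy identity regardless of which boundary arc contains $\t Y$, so your displayed formula holds and divisibility follows immediately; the loop case then just applies the main lemma at both lifts of $P$. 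A pleasant by-product is that your argument never uses that $p$ is prime, so it yields Proposition~\ref{divisor_all} (and a fortiori Proposition~\ref{divisor_boundary}, which is the special case $Q\in\{A,B\}$) for any integer $k>1$, strengthening Remark~\ref{prime}. Two minor remarks: the projection of $\t A^*\t Y$ is always an arc (possibly self-intersecting or a loop), never a closed curve, since both endpoints are lifts of marked points; and you should record separately the degenerate cases $\t Y=\t A^*$ or $\t Y=\t B^*$, where $\gamma$ coincides with $\a$ or $\b$ and the conclusion is immediate.
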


  \begin{proof}
Take an arbitrary arc $PQ$ on $S$ tagged plain (resp. notched) at $P$, consider a quadrilateral with opposite sides $PQ$ and $AB$ (it can be constructed as follows: take any arc with endpoints $B,Q$, and then the fourth arc $AP$ follows $AB$, $BQ$ and then $QP$, where $AP$ is tagged at $P$ in the same way as $PQ$). Since $\lambda_{AP}$ and $\lambda_{BP}$ are divisible by $p$ (by Prop.~\ref{divisor_boundary}), $\lambda_{PQ}\lambda_{AB}$ is also divisible by $p$ by the Ptolemy relation, which implies the proposition as  $\lambda_{AB}=1$. If $Q=P$, then all diagonals and two opposite sides of the quadrilateral are divisible by $p$, so  $\lambda_{PQ}\lambda_{AB}=\lambda_{PQ}$ is divisible by $p^2$.
    
  \end{proof}

Proposition~\ref{divisor_all} allows us to define a new frieze on $S$ once the assumptions of Proposition~\ref{divisor_boundary} are satisfied.

  \begin{cor}
    \label{rescaling}
In the assumption of Proposition~\ref{divisor_boundary}, for any arc $\gamma$ on $S$ define  $$F'(\gamma)=p^{-\varepsilon(\gamma)}F(\gamma),$$ where $\varepsilon$ is as in Definition~\ref{epsilon} applied to puncture $P$. Then $F'$ is a frieze, i.e. $F'(\gamma)\in\Z$ for any arc $\gamma$.
    
\end{cor}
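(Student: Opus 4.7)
The plan is to realise $F'$ as the $\lambda$-length function of a decorated hyperbolic structure on $S$ which differs from the one underlying $F$ only by a single rescaling of the horocycle at $P$. Once this geometric interpretation is established, all the (generalised) Ptolemy relations recalled in Sections~\ref{decorated}--\ref{lambda} hold automatically for $F'$, because they are intrinsic to the hyperbolic structure and are independent of the specific choice of horocycles; moreover $F'$ will be automatically positive real. The only content remaining is then integrality and the normalisation on the boundary.

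Concretely, in the lift to $\H^2$ (Section~\ref{decorated}) I multiply the Euclidean diameter of the horocycle at every preimage of $P$ by $p^2$. By the scaling rule from Section~\ref{decorated}, this divides the $\lambda$-length of each plain end at $P$ by a factor of $p$, and by the defining relation $l(\bar h)=1/l(h)$ for the conjugate horocycle (Section~\ref{lambda}) it multiplies the $\lambda$-length of each notched end at $P$ by a factor of $p$. Collecting the contributions from the (at most two) ends of $\gamma$ at $P$, the total factor is exactly $p^{-\varepsilon_P(\gamma)}$, which matches the definition of $F'$. Thus $F'$ is realised by a genuine decorated hyperbolic structure on $S$, and so it satisfies all (generalised) Ptolemy relations.

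For the boundary normalisation, Lemma~\ref{nok} tells us $P$ is a puncture, so no boundary arc of $S$ is incident to $P$; consequently $\varepsilon_P(\gamma)=0$ on boundary arcs, and $F'(\gamma)=F(\gamma)=1$ there. For integrality, if $\varepsilon_P(\gamma)\le 0$ then $p^{-\varepsilon_P(\gamma)}$ is a non-negative integer power of $p$ and $F'(\gamma)\in\Z$ is automatic. The only nontrivial case is $\varepsilon_P(\gamma)>0$, i.e.\ $\gamma$ tagged plain at $P$ with one or two ends there: either $\varepsilon_P(\gamma)=1$ and we need $p\mid F(\gamma)$, or $\gamma$ is a plain loop at $P$ with $\varepsilon_P(\gamma)=2$ and we need $p^2\mid F(\gamma)$. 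Both divisibilities are exactly the content of Proposition~\ref{divisor_all}, so $F'(\gamma)\in\Z$ in all cases.

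I do not expect any real obstacle here: the hard work has already been invested in Propositions~\ref{divisor_boundary} and~\ref{divisor_all}, and Corollary~\ref{rescaling} is essentially a packaging of those divisibility statements into the language of decorated hyperbolic structures, with the rescaling of horocycles providing the mechanism that guarantees the Ptolemy relations for $F'$ for free.
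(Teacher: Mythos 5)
Your proposal is correct and follows exactly the route the paper intends: integrality is precisely the content of Proposition~\ref{divisor_all}, and the fact that $F'$ is still a genuine frieze (satisfies all generalised Ptolemy relations, is $1$ on boundary arcs) is obtained by reinterpreting $F'$ as the same hyperbolic metric with the horocycle at $P$ rescaled by $p^2$, which is what the paper records in Remark~\ref{rescale_h}. The paper leaves this proof implicit; your write-up just makes the same argument explicit.
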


\begin{remark}
  \label{rescale_h}
Frieze $F'$ corresponds to the same hyperbolic metric on $S$ as $F$ does, with a slightly different decoration: the horocycle at $P_i$ is amended. More precisely, if we consider $S$ as a quotient of a domain in the hyperbolic plane $\H^2$, the Euclidean size of the horocycle at every preimage of $P_i$ is multiplied by $p^2$ compared to $F$ (cf. Remark~\ref{disc}).  
  
  \end{remark}

The next lemma is the final preparatory result needed for the proof of Theorem~\ref{main}.

 \begin{lemma}
  \label{divisor}
  Let $T$ be a unitary triangulation of a bordered surface $S$ with marked points $\{P_i\}$ of valencies $\{m_i\}$ (where $m_i=1$ for $P_i\in\partial S$), and assume that all arcs in $T$ are tagged plain (except for conjugate pairs). Let $\{k_i\}$ be a collection of positive integers (with $k_i=1$ for all boundary marked points), and suppose that there is a positive integral frieze $F$ on $S$ defined by $\displaystyle{F(\gamma)=\prod_i\!k_i^{\varepsilon_i(\gamma)}\lambda_\gamma}$. Then $k_i$ divides $m_i$ for every $i$.
  \end{lemma}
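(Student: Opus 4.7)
\medskip

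\noindent\textbf{Proof plan.}
The boundary case is trivial since $k_i=m_i=1$, so fix a puncture $P$ and write $k:=k_P$, $m:=m_P$. I will argue by contradiction: suppose $k\nmid m$, so some prime $p$ satisfies $v_p(k)>v_p(m)$. The idea is to exhibit simple arcs $\alpha=PA$ and $\beta=PB$ forming a triangle on $S$ with a boundary arc $AB$, whose $\lambda$-lengths in the unitary decoration are coprime, and then extract divisibility from the integrality of $F$ on the conjugate arcs $\bar\alpha,\bar\beta$ (i.e.\ $\alpha,\beta$ retagged notched at $P$).

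Pick any boundary arc $AB$ of $S$ (which exists since $\partial S\neq\emptyset$) and any ideal triangulation $T'$ of $S$ (not necessarily $T$) containing a triangle with vertices $A,P,B$ and boundary side $AB$. Such a $T'$ exists for every puncture $P$ and every boundary arc $AB$, since one can always draw simple arcs $\alpha=PA$ and $\beta=PB$ so that $\alpha\cup\beta\cup AB$ bounds a disc on $S$, and then complete to a triangulation. Using Remark~\ref{unitary}, I realise $S=\Omega/\Gamma$ with $\tilde P=\infty$ and the unitary horocycles being Ford circles. The triangle $APB$ lifts to an ideal triangle $\infty\,\tilde A\,\tilde B$ inside $\Omega$ with $\tilde A=a_1/b_1$ and $\tilde B=a_2/b_2$; since $AB$ has unit $\lambda$-length, equation~(\ref{eq_det}) gives $|a_1b_2-a_2b_1|=1$ and hence $\gcd(b_1,b_2)=1$, while $\lambda_\alpha=b_1$ and $\lambda_\beta=b_2$ by~(\ref{eq_det}) applied to the pairs $(\infty,\tilde A)$ and $(\infty,\tilde B)$.

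Remark~\ref{notched} then gives $\lambda_{\bar\alpha}=mb_1$ and $\lambda_{\bar\beta}=mb_2$ for the notched-at-$P$ versions (computed in the original unitary decoration), and the formula defining $F$ yields $F(\bar\alpha)=k^{-1}\cdot mb_1=mb_1/k$ and similarly $F(\bar\beta)=mb_2/k$. Since $\alpha,\beta$ (hence $\bar\alpha,\bar\beta$) are simple tagged arcs on $S$, integrality of $F$ forces $k\mid mb_1$ and $k\mid mb_2$; combined with $\gcd(b_1,b_2)=1$ this yields $k\mid m$, contradicting $v_p(k)>v_p(m)$. The main obstacle I expect is the small topological step of producing the triangle $APB$ on $S$ for the given puncture and boundary arc; although intuitively clear (take $\alpha,\beta$ hugging a tubular neighbourhood of $AB$ and then running back to $P$), it needs to be argued cleanly, since the entire reasoning relies on $\alpha,\beta$ being genuine simple arcs in order to invoke integrality of the frieze.

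\medskip
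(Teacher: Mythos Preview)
Your argument is correct, and it is genuinely more direct than the paper's. The paper invokes Proposition~\ref{divisor_all} (which in turn rests on Proposition~\ref{divisor_boundary} and Lemmas~\ref{0}--\ref{many}) to show that \emph{every} arc tagged notched at $P_i$ has $\lambda$-length divisible by $k_i$, and then specialises to an arc of $T$, whose notched version has $\lambda$-length exactly $m_i$. You bypass that machinery entirely: by choosing the two arcs $\alpha,\beta$ from $P$ to the endpoints of a single boundary arc $AB$, equation~(\ref{eq_det}) forces $\gcd(\lambda_\alpha,\lambda_\beta)=1$, and then the two integrality constraints $k\mid m\lambda_\alpha$ and $k\mid m\lambda_\beta$ give $k\mid m$ at once. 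The only non-formal input you use is the identity $\lambda_{\bar\gamma}=m\,\lambda_\gamma$ for any arc $\gamma$ with a single plain end at $P$; this is implicit in Remark~\ref{notched}, since passing to the conjugate horocycle at $P$ multiplies all $\lambda$-lengths at $P$ by one fixed constant, and evaluating on an arc of $T$ shows that constant to be $m$. What the paper's route buys is a stronger intermediate statement (divisibility for all notched arcs), which is reused elsewhere; what your route buys is an elementary proof of the lemma that needs nothing beyond Remark~\ref{unitary} and~(\ref{eq_det}).

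Two cosmetic points. First, the proof-by-contradiction framing and the prime $p$ are never used: you prove $k\mid m$ outright. Second, you do not need to complete $\alpha,\beta,AB$ to an auxiliary triangulation $T'$; you only need the triangle itself, so that it lifts to a single ideal triangle in $\Omega$. The topological step you flag is routine: pick any simple arc $\alpha$ from $P$ to $A$, and let $\beta$ run from $B$ along $AB$ and then along $\alpha$, pushed slightly off; then $\alpha\cup\beta\cup AB$ bounds a disc even when $A=B$. Finally, when invoking Remark~\ref{unitary}, that remark sends a triangle of $\widetilde T$ to $\{0,1,\infty\}$; to place $\tilde P=\infty$ you post-compose with an element of $PSL_2(\Z)$, which preserves rational points and Ford circles, so your identifications $\lambda_\alpha=|b_1|$, $\lambda_\beta=|b_2|$ remain valid.
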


  \begin{proof}
    Since $k_j=1$ for boundary marked points, for $F$ being a frieze it is necessary that every arc notched at $P_i$ with other endpoint at the boundary has $\lambda$-length divisible by $k_i$ (see Remark~\ref{notched}). By Prop.~\ref{divisor_all}, this implies that every arc tagged notched at $P_i$ has  $\lambda$-length divisible by $k_i$ (as $F(AB)=1$ for every boundary arc $AB$). Now, take any arc of $T$ with endpoint $P_i$. As it was observed in Remark~\ref{notched}, the $\lambda$-length of the corresponding arc tagged notched at $P_i$ is a divisor of $m_i$, so $k_i$ should also divide $m_i$.  
    
  \end{proof}

\begin{theorem}
  \label{main}
  Let $S$ be a marked surface with boundary and marked points $\{P_i\}$, and let $F$ be a frieze on $S$. Then there exists a unitary frieze $\hat F$ on $S$ (with a unitary triangulation $T$) and a collection of positive integers $\{k_i\}$ (where $k_i=1$ for $P_i\in\partial S$), such that $k_i$ divides $\val_T(P_i)$ and $\displaystyle{F(\gamma)=\prod_i\!k_i^{\varepsilon_i(\gamma)}\hat F(\gamma)}$  for every arc $\gamma$ in $S$.

\end{theorem}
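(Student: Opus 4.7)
The plan is to use the decorated hyperbolic structure defined by $F$ to recognize the uniformization of $S$ as compatible with the Farey triangulation of $\H^2$, and then read off $\hat F$, its unitary triangulation, and the $k_i$'s from this recognition. By Penner's theorem, $F$ gives such a decorated structure and a uniformization $S=\Omega/\Gamma$ with $\Omega\subset\H^2$. I would begin by picking any boundary arc $AB$, lifting it to $\H^2$, and normalizing via a M\"obius transformation so that $\tilde A=\infty$, $\tilde B=0$ with the standard Ford circles (the line $y=1$ at $\infty$ and the Euclidean circle of diameter $1$ tangent to $\R$ at $0$) as horocycles. This is permitted because $F(AB)=1$ matches the corresponding Ford $\lambda$-length.

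The key technical step is to prove, by induction on the triangles of a lifted triangulation starting from this boundary arc, that every vertex of $\Omega$ lies at a rational point of $\p \H^2$ and that the horocycle at each vertex is the Ford circle at that point with Euclidean diameter divided by $k^2$ for some positive integer $k$ depending only on the corresponding marked point of $S$. The base case treats a triangle adjacent to $\tilde A \tilde B$: if $C$ is the third vertex and $F(AC),F(BC)$ are the adjacent values, a direct $\lambda$-length computation places $\tilde C$ at the rational point $\pm F(BC)/F(AC)$ with rescaling factor $\gcd(F(AC),F(BC))$. The inductive step proceeds analogously via the Ptolemy relation, with the integrality of $F$ forcing rationality of each new vertex position. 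The rescaling factor is forced to be $1$ at every boundary marked point by Lemma~\ref{nok}, since a prime factor at a boundary point would contradict Proposition~\ref{divisor_boundary}; so $k_i=1$ for $P_i\in\p S$. Consistency of the rescaling factor across different lifts of the same puncture follows from Propositions~\ref{divisor_boundary} and~\ref{divisor_all}, which pin down the divisibility of $F$-values on every arc incident to the puncture.

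Once these claims are established, $\Gamma\subset PSL_2(\Z)$ (as it must preserve both the set of rational vertices and the collection of ``Ford rescaled by an integer'' horocycles), so the Farey triangulation of $\H^2$ restricts to a $\Gamma$-invariant triangulation of $\Omega$ which descends to a tagged triangulation $\hat T$ of $S$. Define $\hat F$ as the frieze coming from the same hyperbolic metric on $S$ but with unrescaled Ford circles at every preimage of every marked point; by the Farey-length formula~(\ref{eq_det}) every edge of $\hat T$ has $\hat F$-value $1$, so $\hat F$ is unitary with unitary triangulation $\hat T$. The original $F$ is recovered from $\hat F$ by dividing the horocycle at each $P_i$ by $k_i^2$, and Theorem~\ref{all-pts} then gives the rescaling formula $F(\gamma)=\prod_i k_i^{\varepsilon_i(\gamma)}\hat F(\gamma)$. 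The divisibility $k_i\mid\val_{\hat T}(P_i)$ is then exactly Lemma~\ref{divisor}.

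The principal obstacle is the inductive step of the second paragraph: showing that the rescaling factors of horocycles come out as positive integers (and not arbitrary positive rationals) and are consistent across different lifts of the same puncture. Rationality of the new vertex positions is immediate from Ptolemy together with the integrality of $F$, but the integer-valuedness and consistency of the rescaling factors is precisely what Propositions~\ref{divisor_boundary} and~\ref{divisor_all} were developed to ensure.
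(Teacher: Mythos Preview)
Your overall strategy is correct and matches the paper's: use the decorated hyperbolic structure, normalize a boundary arc to $0,\infty$ with Ford circles, show preimages of marked points are rational with (rescaled) Ford circles, descend the Farey triangulation, and finish with Lemma~\ref{divisor}. The key ingredients you invoke---Propositions~\ref{divisor_boundary} and~\ref{divisor_all}---are exactly the ones the paper relies on.

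The paper's execution differs from yours in one important organizational way that dissolves what you call ``the principal obstacle.'' Rather than carrying integer rescaling factors through an induction on triangles of a lifted triangulation and then checking consistency across lifts, the paper \emph{rescales first}: for each marked point $P_i$, pick a triangle $P_0P_\infty P_i$ with the fixed boundary side $\delta=P_0P_\infty$, and if $F(P_0P_i)$ and $F(P_\infty P_i)$ share a prime factor $p$, apply Corollary~\ref{rescaling} (which packages Proposition~\ref{divisor_all}) to pass to a new frieze $F'$ with $F(\gamma)=p^{\varepsilon_i(\gamma)}F'(\gamma)$. Iterating, one reaches a frieze $\hat F$ in which every such boundary-adjacent triangle has coprime side values. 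At that point a direct one-triangle computation places each preimage $\tilde P_i$ at $\pm\hat F(P_0P_i)/\hat F(P_\infty P_i)$ with the \emph{actual} Ford circle as horocycle---no residual scaling factor, no induction on triangles, no separate consistency check. Your induction would work, but the inductive step (new vertex added to two already-placed vertices neither of which is on the boundary) is genuinely harder to write down cleanly than the paper's ``every point connects directly to the fixed boundary arc'' argument, and integrality of $F$ alone is not enough for it---you really do need the Propositions even to get an integer rescaling factor at each new vertex, not just for consistency.

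One small point you pass over: one must check that the Farey triangulation of $\Omega$ actually descends to an honest ideal triangulation of $S$ (i.e.\ that $\hat F$-length-$1$ geodesics do not self-intersect or cross each other on $S$). The paper handles this by a short integrality argument via the Ptolemy relation in the spirit of~\cite[Lemma~3.3]{FP}.
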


  \begin{proof}

    Let $F$ be a frieze on $S$. Recall from Section~\ref{prelim} that $S$ can be represented as a quotient of a domain $\Omega\subset\H^2$ by a certain discrete group. The domain $\Omega$ is an infinite polygon bounded by lifts of boundary arcs of $S$, and $\Omega$ is defined uniquely up to isometry of $\H^2$.

    Choose one boundary segment $\delta=P_0P_\infty$ of $S$ (the two marked points may coincide), and  lift the segment to $\H^2$ to the line from $\tilde P_0=0$ to $\tilde P_\infty=\infty$. Since $\lambda_{P_0P_\infty}=1$, the corresponding horocycles are tangent, place the common point of two horocycles at $i$ (i.e. the horocycles at $\tilde P_0$ to $\tilde P_\infty$ are Ford circles). This choice then defines a lift of any arc $P_0P_i$ or $P_\infty P_i$ uniquely. By lifting every triangle  $P_0P_\infty P_i$ in $S$ with side $P_0P_\infty=\delta$, we obtain all preimages of marked points.

Consider any triangle $P_0P_\infty P_i$ on $S$ with side $\delta$. If $F(P_0P_i)$ and $F(P_\infty P_i)$ are not coprime, then we can use Corollary~\ref{rescaling} to define a new frieze $F'$, such that $F(\gamma)=p^{\varepsilon_i(\gamma)}F'(\gamma)$ for certain prime $p$. Applying this procedure iteratively, we may assume that $F(\gamma)=\prod_ik_i^{\varepsilon_i(\gamma)}\hat F(\gamma)$ for some collection of integers $\{k_i\}$ and some frieze $\hat F$, where  $\hat F(P_0P_i)$ and $\hat F(P_\infty P_i)$ are coprime for any triangle $P_0P_\infty P_i$ on $S$.  We are left to prove that $\hat F$ is unitary. 

Let $01\tilde P_i$ be the lift of the triangle $P_0P_\infty P_i$ considered above. An elementary computation shows that $\tilde P_i=\pm \hat F(P_0P_i)/\hat F(P_\infty P_i)$, and the horocycle of $\hat F$ at $\tilde P_i$ has Euclidean diameter $\frac1{\hat F(P_\infty P_i)^2}$, i.e. it is a Ford circle.


In particular, all $\tilde P_i$ are rational, i.e. they are vertices of the Farey graph. Moreover, the horocycle at every preimage $\tilde P_i$ is a Ford circle. In other words, Ford circles at all  preimages of $P_i$ are mapped to the same horocycle.


Now, the Farey graph provides a triangulation of $\Omega$ with all edges of $\lambda$-length $1$ (with respect to Ford circles). This triangulation induces an ideal triangulation $\widetilde T$ of $S$, all arcs of $\widetilde T$ have $\lambda$-length $1$ with respect to the images of the Ford circles (note that $\widetilde T$ is indeed an ideal triangulation: $\lambda$-lengths of all geodesic arcs are integers as they come from the Farey graph, so any geodesic of $\lambda$-length $1$ is non-self-intersecting, and any two do not intersect -- this can be seen by applying the Ptolemy relation and using integrality of all $\lambda$-lengths as in~\cite[Lemma 3.3]{FP}). Therefore, in view of Remark~\ref{un_ideal} the frieze $\hat F$ is unitary, and  we have   $F(\gamma)=\prod_ik_i^{\varepsilon_i(\gamma)}\hat F(\gamma)$ as required. Finally, every $k_i$ is a divisor of the $\val_T(P_i)$ as shown in Lemma~\ref{divisor} (where $T$ is any tagged triangulation with ideal triangulation $\widetilde T$), which completes the proof.

  \end{proof}

  \begin{remark}
    \label{uniqueness}
It is easy to see from the proof that the ideal triangulation $\widetilde T$ in Theorem~\ref{main} is unique: by sending a boundary arc to the line from $0$ to $\infty$ with Ford circles, all arcs of any unitary triangulation are lifted to arcs of the Farey graph, and the image of the Farey graph on $S$ is uniquely defined. Therefore, unitary triangulation $T$ is unique up to change of taggings of punctures. The numbers $k_i$ can be found as $k_i=\gcd(F(P_0P_i),F(P_\infty P_i))$ and do not depend on the choice of arc $P_0P_i$.  Change of tagging of $T$ at $P_i$ corresponds to taking $k_i=\val_T(P_i)$.     
    
    \end{remark}

\begin{definition}  
  Friezes on $S$ are {\em equivalent} if there exists an element of the mapping class group of $S$ taking one frieze to the other.
  
\end{definition}

  We get some immediate corollaries of Theorem~\ref{main}. 
  
\begin{cor}
    \label{unpunctured}
    All friezes on unpunctured surfaces are unitary. There is a bijective correspondence between equivalence classes of friezes and combinatorial types of ideal triangulations.
    \end{cor}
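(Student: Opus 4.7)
The plan is to obtain Corollary~\ref{unpunctured} as a direct consequence of Theorem~\ref{main} together with Remark~\ref{uniqueness}; no substantially new ingredient is needed, so the task is really one of organisation.

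First I would prove the unitarity assertion. Let $F$ be an arbitrary frieze on an unpunctured surface $S$. Theorem~\ref{main} supplies a unitary frieze $\hat F$, a unitary triangulation $T$, and positive integers $\{k_i\}$ such that $F(\gamma)=\prod_i k_i^{\varepsilon_i(\gamma)}\hat F(\gamma)$ for every arc $\gamma$, with the side condition that $k_i=1$ whenever $P_i\in\partial S$. Since $S$ has no punctures, every marked point lies on $\partial S$; hence every $k_i$ equals $1$ and $F=\hat F$ is unitary.

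Next I would exhibit the bijection. One map sends a frieze $F$ to the underlying ideal triangulation $\widetilde T$ extracted from the unitary triangulation of Theorem~\ref{main}; by Remark~\ref{uniqueness} this ideal triangulation is determined uniquely, and since there are no punctures there is no residual tagging data. The inverse map sends an ideal triangulation $\widetilde T$ of $S$ to the frieze obtained by declaring $\lambda_\gamma=1$ for every $\gamma\in\widetilde T$: Penner's theorem produces the corresponding decorated hyperbolic structure, and the Farey-triangulation argument from the proof of Theorem~\ref{main} --- lifting one triangle of $\widetilde T$ to the ideal triangle with vertices $0,1,\infty$ decorated by Ford circles --- identifies the lift of $\widetilde T$ with a piece of the Farey graph, so equation~(\ref{eq_det}) forces $\lambda$-lengths of all arcs to be integers and the assignment does extend to a positive integral frieze. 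By construction the two maps are mutually inverse.

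Finally I would address the equivalence statement. Both constructions are manifestly natural with respect to the action of the mapping class group of $S$, so the bijection is equivariant and descends to a bijection between equivalence classes of friezes and mapping-class-group orbits of ideal triangulations, i.e. combinatorial types. The single step requiring mild care --- and therefore the principal ``obstacle'' --- is verifying integrality of the unitary frieze on \emph{every} (tagged) arc rather than merely on the arcs of $\widetilde T$; in the unpunctured case tagged arcs reduce to ordinary arcs and the Farey-graph identification already handled above settles this uniformly via~(\ref{eq_det}).
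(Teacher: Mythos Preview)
Your proof is correct and follows the same approach as the paper, which treats the corollary as immediate from Theorem~\ref{main} and Remark~\ref{uniqueness} without further comment; you have simply spelled out the details (the vanishing of all $k_i$ in the absence of punctures, and the Farey-graph argument of Remark~\ref{unitary} for integrality of the inverse map) that the paper leaves implicit.
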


\begin{cor}
\label{finite}
  The number of equivalence classes of friezes on a given bordered surface is finite.

  \end{cor}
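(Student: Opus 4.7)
My plan is to deduce Corollary~\ref{finite} as an immediate consequence of Theorem~\ref{main} combined with two standard finiteness facts about bordered surfaces.

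First I would recall, via Theorem~\ref{main} and Remark~\ref{uniqueness}, that every frieze $F$ on $S$ is encoded by the data of a unitary triangulation $T$ together with a tuple of positive integers $\{k_i\}$ indexed by the punctures, where each $k_i$ divides $\val_T(P_i)$ (with $k_i = 1$ at boundary marked points). Hence, to bound the number of equivalence classes of friezes it suffices to bound the number of equivalence classes of such pairs $(T, \{k_i\})$ under the mapping class group action.

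Next I would invoke the finiteness of mapping class group orbits on tagged triangulations of a bordered marked surface: for fixed topological type of $(S,M)$ with nonempty boundary, there are only finitely many combinatorial types of ideal triangulations, and hence of tagged triangulations. This is a standard fact (the flip graph is locally finite and the mapping class group acts cofinitely on it for bordered surfaces). Consequently, we may choose a finite set of representative unitary triangulations $T_1, \dots, T_N$ covering all equivalence classes.

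Finally, for each representative $T_j$ the number of admissible divisor tuples $\{k_i\}$ is finite: at every puncture $P_i$ the integer $k_i$ is constrained to divide $\val_{T_j}(P_i)$, which is a positive integer, so the set of possible values for $k_i$ is finite, and the set of tuples is bounded by $\prod_i d(\val_{T_j}(P_i))$, where $d(\cdot)$ denotes the number of positive divisors. Since each $T_j$ contributes only finitely many friezes and there are only finitely many $T_j$, the total number of equivalence classes of friezes is finite. I do not expect any genuine obstacle here: the statement is essentially a bookkeeping corollary of Theorem~\ref{main}, with the only nontrivial input being the cofiniteness of the mapping class group action on triangulations of bordered surfaces, which is classical.
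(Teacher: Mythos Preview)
Your proposal is correct and follows essentially the same approach as the paper: the paper treats this corollary as immediate from Theorem~\ref{main}, noting in the introduction that ``every triangulation gives rise to a finite number of friezes'' and ``the number of combinatorial types of triangulations is finite.'' Your argument simply spells out these two finiteness facts explicitly, which is exactly the intended reasoning.
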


  \section{Friezes on closed surfaces}
  \label{closed}

In this section, we present partial results concerning friezes on closed surfaces, as well as examples showing that some results of the previous section do not hold in these settings.

    \begin{prop}
      \label{coprime}
Let $F$ be a frieze on $S$, and let $T$ be an ideal triangulation of $S$ such that for any triangle in $T$ the values of $F$ on the sides of the triangle are mutually coprime. Then $F$ is unitary.

      \end{prop}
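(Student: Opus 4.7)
The plan is to mirror the proof of Theorem~\ref{main}, substituting the boundary anchor (a boundary edge of $F$-value $1$ sent to the Farey edge $(0,\infty)$ with Ford circles) by a triangle anchor supplied by the coprime hypothesis. Pick any triangle $\tau$ of $T$ with vertices $P_0,P_\infty,P_1$ and $\lambda$-lengths $a=F(P_0P_\infty)$, $b=F(P_\infty P_1)$, $c=F(P_0P_1)$, which are pairwise coprime by hypothesis.

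First I would realise $\tau$ as an ideal triangle in the Farey tessellation with Ford circles as horocycles. Concretely, set $\tilde P_0 = 0/1$ and look for $\tilde P_\infty = a/q$, $\tilde P_1 = c/s$ in lowest terms satisfying $|as - cq| = b$. Since $\gcd(a,c)=1$, the equation $as - cq = \pm b$ has integer solutions, and any such $(q,s)$ automatically satisfies $\gcd(a,q)\mid\gcd(a,b)=1$ and $\gcd(c,s)\mid\gcd(c,b)=1$, so that $a/q$ and $c/s$ are in lowest terms; by formula~(\ref{eq_det}), the Ford-circle $\lambda$-lengths of this triangle are then exactly $(a,b,c)$.

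Next I would extend this anchor lift throughout the universal cover, as in Theorem~\ref{main}. For a triangle $\tau'$ of $T$ sharing an edge (say $P_0P_\infty$) with the already lifted region and having third vertex $P_2$ with $F(P_0P_2)=x$, $F(P_\infty P_2)=y$ (pairwise coprime with $a$ by hypothesis), writing $\tilde P_2 = P_{02}/Q_{02}$ and imposing $\lambda(\tilde P_0,\tilde P_2)=x$, $\lambda(\tilde P_\infty,\tilde P_2)=y$ with Ford circles yields a linear system with solutions $P_{02}=\pm x$ and $Q_{02}=(qx\pm y)/a$. The integrality of $Q_{02}$ for appropriate sign choices follows from the Ptolemy relation $aF(P_1P_2)=bx+cy$ combined with the congruence $cq\equiv\pm b\pmod a$ inherited from the anchor step, and a short gcd argument using $\gcd(x,y)=1$ shows $\gcd(P_{02},Q_{02})=1$, so $\tilde P_2$ is in lowest terms and carries its Ford circle. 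Iterating over the dual graph of $T$, every preimage of every marked point in $\H^2$ ends up at a rational number in lowest terms equipped with its Ford circle.

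The conclusion then follows verbatim from the last paragraph of the proof of Theorem~\ref{main}: the Farey tessellation descends to an ideal triangulation $\widetilde{T^*}$ of $S$, all of whose arcs have $\lambda$-length $1$ by~(\ref{eq_det}), so by Remark~\ref{un_ideal} the frieze $F$ is unitary. The main obstacle, in my view, is the anchor step: verifying that a triangle with coprime integer $\lambda$-lengths admits a Farey-with-Ford realisation. Once $\tau$ is placed, the propagation and the conclusion are direct adaptations of the techniques of Section~\ref{sec_bordered}.
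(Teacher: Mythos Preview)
Your proposal is correct and follows essentially the same route as the paper. The paper anchors the first triangle with one vertex at $\infty$ (so the Ford-circle $\lambda$-length to that vertex is simply the denominator of the other point), whereas you anchor at $0$; both use pairwise coprimality to realise the initial triangle on Farey points with Ford circles, then propagate to adjacent triangles via a Ptolemy-based integrality check and a $\gcd$ argument, and finally conclude by letting the Farey tessellation descend exactly as in the last paragraph of Theorem~\ref{main}. One minor point: like the paper, you only work out the propagation explicitly for an edge containing your anchor vertex, and the general induction step (across an arbitrary edge with both endpoints at generic Farey points) is left implicit; this is easily justified by a $PSL_2(\Z)$ change of coordinates sending one endpoint to $\infty$ (or $0$), which preserves Ford circles and $\lambda$-lengths.
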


      \begin{proof}
We want to represent $S$ as a quotient of a domain $\Omega\subset \H^2$ with preimages of all punctures of $S$ being rational points, such that for any $\gamma\in T$ and any its lift $\tilde\gamma$ in $\Omega$ one has $F(\gamma)=\lambda_{\tilde\gamma}$ with respect to the Ford circles. Then the same argument as in the proof of Theorem~\ref{main} shows that $F$ is unitary.  
        
First, consider a triangle $PQR$ in $T$ with $F(PQ)=r$, $F(PR)=q$, and $F(QR)=a$. We may assume that $\tilde R=\infty=\frac10\in\partial \H^2$. Since all $a,q,r$ are mutually coprime, there exist integers $x,y$ coprime with $a$ and $q$ respectively such that $|ya-xq|=r$. This implies that we can take $\tilde Q=\frac{x}a$ and $\tilde P=\frac{y}q$ with Ford circles as horocycles, see Fig.~\ref{fig-coprime}.

\begin{figure}[!h]
\begin{center}
  \epsfig{file=./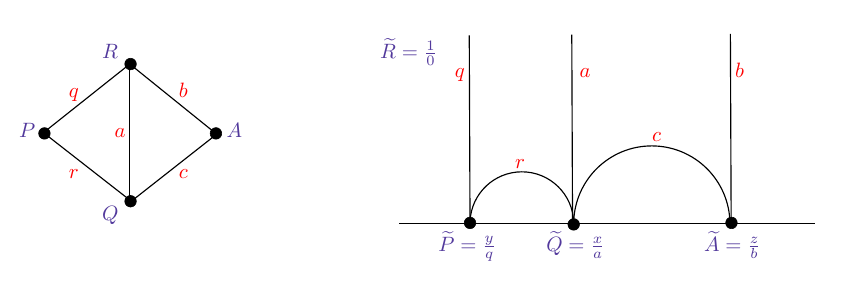,width=0.99\linewidth}
  \caption{To the proof of Prop.~\ref{coprime}: adjacent triangles in triangulation $T$ (left) and their preimage in the universal cover (right).}
\label{fig-coprime}
\end{center}
\end{figure}

Now, consider an adjacent triangle $AQR$ in $T$ with $F(AQ)=c$, and $F(AR)=b$. We claim that $z$ defined by $za-xb=c$ is integer and coprime with $b$, this would imply that we can take $\tilde A=\frac{z}b$ with Ford circle as the horocycle.

Note that we have $y=\frac{xq-r}a\in\Z$. Observe that by Ptolemy relation the $\lambda$-length of the arc $PA$ is equal to
$$\lambda_{PA}=\frac{br+qc}a=\frac{qc+qxb}a-\frac{qxb-br}a=qz-by\in\Z,$$ which implies that
$qz\in\Z$. Since $q$ and $a$ are coprime and $z=\frac{c+xb}a$, we see that $z\in\Z$. Further, $z$ is coprime with $b$ as otherwise $c$ is not coprime with $b$.

The procedure above shows that if a triangle in $T$ is lifted to a triangle in $\H^2$ with rational coordinates and Ford circles as horocycles, then  (in the assumptions of the proposition) every adjacent triangle is also lifted in this way. Therefore, every preimage of every puncture under the quotient map from $\H^2$ to $S$ is rational, and all horocycles are images of Ford circles as required.

        \end{proof}

The following lemma is needed to construct several examples of non-unitary friezes on a four times punctured torus. 

        \begin{lemma}
          \label{ex}
          Let $S$ be a four times punctured torus, consider a unitary triangulation $T$ of $S$ as shown in Fig.~\ref{fig-ex} (we assume all arcs in $T$ to be tagged plain). Then the $\lambda$-length of any loop tagged plain at $P_2$ is divisible by $4$.
          
        \end{lemma}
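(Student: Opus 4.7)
The plan is to reduce the statement to a congruence condition on the deck transformation group via the Farey lift. By Remark~\ref{unitary}, we may represent $S$ as a quotient $\H^2/\Gamma$ for some $\Gamma\subset PSL_2(\Z)$, with the preimages of every puncture lying at vertices of the Farey graph and the horocycles at the preimages being the corresponding Ford circles. Normalize so that a fixed preimage $\tilde P_2^{(0)}$ of $P_2$ lies at $\infty$. Any loop $\ell$ at $P_2$ tagged plain on both ends then lifts to a geodesic from $\infty$ to another preimage $\tilde P_2^{(1)}=M(\infty)$ of $P_2$, where $M=\begin{pmatrix}a & b\\ c & d\end{pmatrix}\in\Gamma$ and $c\neq 0$ (the identity and the parabolic stabilizer of $\infty$ correspond to closed curves bounding a once-punctured disc and do not arise from arcs in the cluster-algebraic sense). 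Consequently $\tilde P_2^{(1)}=a/c$ in lowest terms, and by~(\ref{eq_det}) we obtain $\lambda_\ell=|c|$.

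It therefore suffices to prove that every $M\in\Gamma$ satisfies $c\equiv 0\pmod 4$, that is, $\Gamma\subseteq\Gamma_0(4)$.

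To verify this inclusion, we extract a finite set of generators of $\Gamma$ from the unitary triangulation $T$ in Fig.~\ref{fig-ex}. Each pair of boundary sides of a fundamental polygon for $\Gamma$ that are identified to the same arc of $T$ in $S$ produces one generator, which can be written as an explicit element of $PSL_2(\Z)$ by following the chain of Farey triangles along the arc (starting from the preimage of the triangle containing the chosen base arc with vertices $0,1,\infty$ and Ford-circle horocycles, as provided by Remark~\ref{unitary}). Since $\Gamma_0(4)$ is a subgroup of $PSL_2(\Z)$, it is enough to check the congruence $c\equiv 0\pmod 4$ on each generator; once this is done, $\lambda_\ell=|c|\equiv 0\pmod 4$ follows for every loop $\ell$ of the type considered.

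The main technical obstacle is the combinatorial bookkeeping required to realize the generators of $\Gamma$ as explicit matrices: one must carefully trace how the arcs of $T$ are glued across the fundamental polygon, convert each gluing into a specific element of $SL_2(\Z)$, and verify $c\equiv 0\pmod 4$ for each generator. Everything else in the argument is then immediate, and the conclusion $4\mid\lambda_\ell$ follows from $\lambda_\ell=|c|$.
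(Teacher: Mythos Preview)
Your reduction is sound: lifting the unitary triangulation to the Farey tessellation does place $\Gamma$ inside $PSL_2(\Z)$, and with $\infty$ normalized to a lift of $P_2$, the $\lambda$-length of a plain loop at $P_2$ is exactly the absolute value of the lower-left entry of the corresponding deck transformation. So the lemma is indeed equivalent to the inclusion $\Gamma\subseteq\Gamma_0(4)$ under this normalization.

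However, the proof as written is incomplete. You explicitly identify ``the main technical obstacle'' as computing the side-pairing generators of $\Gamma$ from the fundamental polygon determined by $T$ and checking that each lies in $\Gamma_0(4)$---and then you do not carry this out. But that computation \emph{is} the content of the lemma: everything preceding it is a reformulation, not a proof. For a four-punctured torus $\Gamma$ is free of rank $5$, so five explicit matrices must be produced (by tracing the gluings in Fig.~\ref{fig-ex} through the Farey graph) and their $(2,1)$-entries checked modulo $4$. Until that is done, nothing specific to this triangulation has been verified, and the argument would apply verbatim to any unitary triangulation with $\val_T(P_2)=4$, which is certainly not what one wants to claim.

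For comparison, the paper's argument avoids computing $\Gamma$ altogether. It passes to a finite cover so that the loop has distinct endpoints, and then inducts on the number $m$ of triangles of the lifted triangulation crossed by the lifted arc. The base cases $m=4,5,6$ (with $\lambda$-lengths $4,8,12$) are checked directly; the inductive step locates an intermediate lift of $P_2$ along the path together with an adjacent vertex at $\lambda$-distance $1$, and applies the Ptolemy relation to the resulting quadrilateral. This is self-contained and makes the divisibility visible without any matrix bookkeeping. Your congruence-subgroup formulation is more conceptual and would generalize nicely, but it only becomes a proof once the generators are actually written down and checked.
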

        
          \begin{proof}
            For any given loop at $P_2$ there exists a finite covering $\tilde S$ of $S$ such that the lift of the loop has two distinct endpoints.  Let $\gamma$ be a loop at $P_2$, and let $\tilde \gamma$ be the lift of $\gamma$ on $\tilde S$ connecting points $\tilde P_2^{(1)}$ and $\tilde P_2^{(2)}$. We need to show that $\lambda_{\tilde\gamma}$ is divisible by $4$.

            Denote by $\tilde T$ the lift of $T$ to $\tilde S$, and assume that $\tilde \gamma$ intersects the interior of $m$ triangles of $\tilde T$. The proof is by induction on $m$. The minimal possible value of $m$ is $4$, and for $m=4,5,6$ the direct calculation shows that the $\lambda$-lengths are equal to $4,8,12$ respectively (see Fig.~\ref{fig-ex}), so we now assume that $m>6$.

            Denote by $\Delta_1,\dots,\Delta_m$ the triangles intersected by $\tilde \gamma$, with $\tilde P_2^{(1)}\in\Delta_1$, and consider the maximal $k<m$ such that a lift of $P_2$ is a vertex of $\Delta_k$ (it is easy to see that such $k>1$ does exist), denote by $\tilde P_2^{(3)}$ that lift of $P_2$. Choose any arc of $\tilde T$ with one endpoint at $\tilde P_2^{(3)}$ not intersecting $\tilde\gamma$, denote by $\tilde A$ the other endpoint of this arc.

            Consider now the quadrilateral $\tilde P_2^{(1)}\tilde P_2^{(2)}\tilde P_2^{(3)}\tilde A$ (or $\tilde P_2^{(1)}\tilde P_2^{(2)}\tilde A\tilde P_2^{(3)}$ depending on the position of $\tilde A$, see Fig.~\ref{fig-ex}) with one side $\tilde\gamma$. By the induction assumption, $\lambda_{\tilde P_2^{(1)}\tilde P_2^{(3)}}$ and $\lambda_{\tilde P_2^{(2)}\tilde P_2^{(3)}}$ are divisible by $4$. By Ptolemy relation,  $\lambda_{\tilde P_2^{(1)}\tilde P_2^{(2)}}\lambda_{\tilde P_2^{(3)}\tilde A}$ is also divisible by $4$, but $\lambda_{\tilde P_2^{(3)}\tilde A}=1$ which proves the lemma. 

          \end{proof}
          
    \begin{figure}[!h]
\begin{center}
  \epsfig{file=./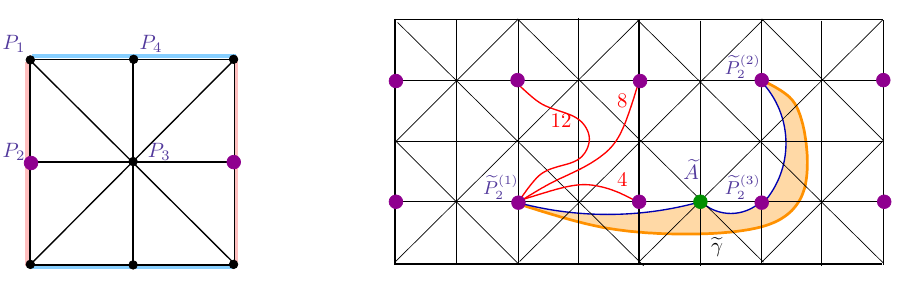,width=0.8\linewidth}
  \caption{Triangulation of a torus with four punctures (left); to the proof of Lemma~\ref{ex} (right). }
\label{fig-ex}
\end{center}
\end{figure}       

Lemma~\ref{ex} gives rise to the following several examples of friezes.

       \begin{example}
          \label{much}
          Let $S$ be a four times punctured torus, and consider a unitary triangulation $T$ of $S$ as in Lemma~\ref{ex} (see Fig.~\ref{fig-ex}). Observe that $\val_T(P_1)=\val_T(P_3)=8$ and $\val_T(P_2)=\val_T(P_4)=4$. Define $k_1=k_3=k_4=2$ and $k_2=8$, i.e. $k_2$ does not divide $\val_T(P_2)$. We claim that the expression
          $$F(\gamma)=\prod_i\!k_i^{\varepsilon_i(\gamma)}\lambda_\gamma$$
          defines nevertheless a frieze on $S$ (as usual, we assume that all arcs of $T$ are tagged plain).

          Indeed, by Theorem~\ref{all-pts}, taking $k_1'=k_3'=k_4'=1$ and $k_2'=4$ would provide a frieze (denote it by $F'$), and taking $k_1''=k_3''=k_4''=4$ and $k_2''=4$ would also provide a frieze (denote it by $F''$). If  $\varepsilon_2(\gamma)\ge -1$ and the other end of $\gamma$ is tagged plain, then $F(\gamma)$ is an integer multiple of $F'(\gamma)$ and thus is integer itself. Similarly, if  $\varepsilon_2(\gamma)\ge -1$ and the other end is tagged notched, then $F(\gamma)$ is an integer multiple of $F''(\gamma)$ and thus is integer.

          Finally, if $\varepsilon_2(\gamma)=-2$ (i.e. $\gamma$ is a loop tagged notched at $P_2$), then $F(\gamma)=\lambda_\gamma/64$. Denote by $\gamma'$ the arc isotopic to $\gamma$ tagged plain at $P_2$. Since $\val_T(P_2)=4$, $$\lambda_\gamma=(\val_T(P_2))^{|\varepsilon_2(\gamma)|}\lambda_{\gamma'}=16\lambda_{\gamma'}.$$ By Lemma~\ref{ex}, $\lambda_{\gamma'}$ is divisible by $4$, and thus $\lambda_{\gamma}$ is divisible by $64$, so $F(\gamma)\in\Z$ as required. 
          
          \end{example}

Example~\ref{much} shows that for closed surfaces the scaling constants $k_i$ is not necessary a divisor of the valence of $P_i$. The next example shows that $k_i$ is not always an integer.

        \begin{example}
          \label{non-integers}
 Again, let $S$ be a four times punctured torus with a unitary triangulation $T$ of $S$ as in Lemma~\ref{ex}. Define $k_1=k_3=k_4=2$ and $k_2=\frac12$, i.e. $k_2\notin\Z$. Then the expression
          $$F(\gamma)=\prod_i\!k_i^{\varepsilon_i(\gamma)}\lambda_\gamma$$
          still defines a frieze on $S$.

The proof is similar to the one in Example~\ref{much}. In this case we should take $k_1'=k_2'=k_3'=k_4'=1$ for $F'$, and $k_1''=k_3''=k_4''=4$, $k_2''=1$ for $F''$, and compare $F$ to them for $\varepsilon_2(\gamma)\le 1$. The values of $F$ on the loops tagged plain at $P_2$ are integer by Lemma~\ref{ex}.            
\end{example}

\begin{figure}[!h]
\begin{center} 
 \epsfig{file=./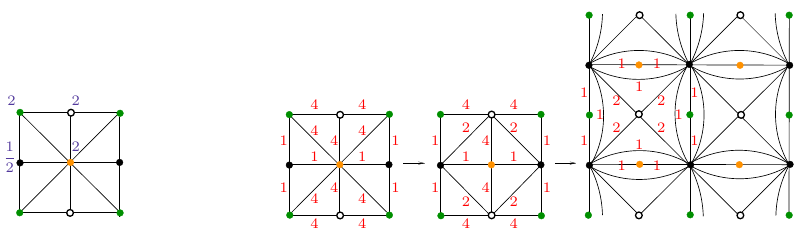,width=1.01\linewidth}
  \caption{To Examples~\ref{non-integers} and~\ref{non-unique}. The scaling constants $\{k_i\}$ (left), and obtaining the same frieze from a different unitary triangulation (right).}
\label{fig-nonu}
\end{center}
\end{figure}

          The next example shows that the counterpart of Remark~\ref{uniqueness} does not hold for closed surfaces: the same frieze can appear as a rescaling of two distinct unitary friezes.

          \begin{example}
\label{non-unique}

Consider the frieze defined in Example~\ref{non-integers}. Applying a sequence of flips and applying Prolemy relation, we obtain triangulation $T'$ shown in Fig.~\ref{fig-nonu}. In this triangulation, $F(\gamma)=1$ for all arcs of $T'$ not incident to $P_4$, $F(\gamma)=2$ for all arcs of $T'$ incident to $P_4$, and $\val_{T'}(P_4)=4$. Therefore, $F$ can be obtained from a unitary frieze $\hat F'$ with unitary triangulation $T'$ as  $F(\gamma)=\prod_i{\hat k_i}^{\varepsilon_i(\gamma)}\hat F'(\gamma)$, where $\hat k_1=\hat k_2=\hat k_3=1$ and $\hat k_4=2$.  

\end{example}

\appendix
\section{Unimodular matrices in Conway--Coxeter friezes}

The appendix is devoted to a geometric answer to the question of Alain Valette. 

\begin{prop}
  \label{every}
For every matrix $M=\begin{pmatrix}a&b\\ c&d\end{pmatrix}\in SL_2(\Z)$ with positive entries there exists a Conway--Coxeter frieze containing $M$ as a diamond\qquad    
{\small
$
  \begin{tabular}{ccccccccccccccccccccc}
&$b$\\
    $a$&&$d$\\
    &$c$\\
  \end{tabular}.
$
}
  \end{prop}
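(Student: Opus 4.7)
The plan is to realize $M$ as a diamond in the Conway--Coxeter frieze of a suitably triangulated polygon, via Penner's $\lambda$-length interpretation recalled in Section~\ref{friezes}. The key idea is that formula~(\ref{eq_det}) lets me engineer $a,b,c,d$ as $\lambda$-lengths directly from the coordinates of four carefully chosen Farey vertices.

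\textbf{Step 1: placing the four points.} I place $V_1=\infty=\frac{1}{0}$, $V_2=\frac{0}{1}$, $V_3=\frac{b}{a}$, $V_4=\frac{d}{c}$ on $\partial\H^2$. Since $\gcd(a,b)$ and $\gcd(c,d)$ both divide $ad-bc=1$, the fractions $b/a$ and $d/c$ are already reduced, so all four are distinct Farey vertices. Positivity of $a,b,c,d$ together with $d/c-b/a=1/(ac)>0$ forces the cyclic order $V_1,V_2,V_3,V_4$ on $\partial\H^2$. Formula~(\ref{eq_det}) then immediately yields $\lambda(V_1,V_2)=\lambda(V_3,V_4)=1$ (so these two sides are Farey edges) together with $\lambda(V_1,V_3)=a$, $\lambda(V_2,V_3)=b$, $\lambda(V_1,V_4)=c$, $\lambda(V_2,V_4)=d$, all measured with respect to Ford circles; the identity $ad-bc=1$ is precisely the Ptolemy relation~(\ref{eq_Ptolemy}) for the ideal quadrilateral $V_1V_2V_3V_4$.

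\textbf{Step 2: enlarging to a polygon.} I extend $\{V_1,V_2,V_3,V_4\}$ to the vertex set of an ideal polygon $\Pi$ (in the same cyclic order) all of whose sides are Farey edges, by inserting Farey-adjacent paths along the two sides that are not yet Farey: from $V_2$ to $V_3$ through rationals in $(0,b/a)$, and from $V_4$ to $V_1$ through rationals in $(d/c,\infty)$. If $b=1$ (resp.\ $c=1$) the corresponding arc is already a Farey edge and no insertion is needed. In general, a finite Farey-adjacent path on a prescribed side between two given Farey vertices always exists; this is a standard fact that can be read off the sequence of Farey triangles crossed by the relevant geodesic in $\H^2$ (equivalently, from the continued-fraction expansion), whose vertices on the chosen side constitute the desired path. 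This is the only nontrivial ingredient in the proof.

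\textbf{Step 3: reading off the diamond.} By construction $\Pi$ is an ideal $(n+3)$-gon all of whose boundary arcs have $\lambda$-length $1$ with respect to Ford circles, so $\Pi$ viewed as an unpunctured disc produces a unitary Conway--Coxeter frieze in the sense of Section~\ref{friezes}, with entries equal to the $\lambda$-lengths of the diagonals indexed by cyclic distance. In the cyclic labelling of the vertices of $\Pi$, the quadruple $V_1,V_2,V_3,V_4$ occupies positions $P_i, P_{i+1}, P_{i+k+1}, P_{i+k+2}$ for some $k\geq 1$ (since $V_2$ lies strictly between $V_1$ and $V_3$ in $\Pi$), and at this location the frieze displays $\lambda(V_1,V_3)=a$ and $\lambda(V_2,V_4)=d$ in row $k$, $\lambda(V_2,V_3)=b$ in row $k-1$, and $\lambda(V_1,V_4)=c$ in row $k+1$ --- exactly the diamond $M$. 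Everything beyond the Farey-path construction of Step~2 is direct computation with~(\ref{eq_det}) and bookkeeping of the Conway--Coxeter labelling.
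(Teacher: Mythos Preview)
Your proof is correct and follows essentially the same approach as the paper: both realize the four entries as $\lambda$-lengths of the sides and diagonals of an ideal quadrilateral with Farey vertices (you use $\tfrac{1}{0},\tfrac{0}{1},\tfrac{b}{a},\tfrac{d}{c}$, the paper uses $\tfrac{0}{1},\tfrac{1}{0},\tfrac{a}{b},\tfrac{c}{d}$, which differ only by the $PSL_2(\Z)$-inversion $z\mapsto 1/z$), and then complete the two non-Farey sides to Farey-edge paths by adjoining the vertices of the Farey triangles crossed by those geodesics. Your Step~3 makes the diamond indexing more explicit than the paper does, but the underlying construction is identical.
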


\begin{proof}
Consider the quadrilateral  $ABCD\subset\H^2$ with rational vertices, where $A=\frac01$, $B=\frac10$, $C=\frac{a}b$  and   $D=\frac{c}d$. Since $M\in SL_2(\Z)$, the expressions for $C$ and $D$ are reduced, and $\frac{a}b>\frac{c}d$. Computing the $\lambda$-lengths with respect to Ford circles by using the  equality~(\ref{eq_det}), it is easy to see that the diagonals of $ABCD$ have $\lambda$-lengths $a,d$, and the $\lambda$-lengths of sides are $1,b,1,c$ (listed in a cyclic order), see Fig.~\ref{fig-abcd}.

Now consider the part of the Farey graph spanned by $A,B,C,D$ and all vertices of all triangles of the Farey triangulation intersected by the lines $AD$ and $BC$. The convex hull of all these points is a polygon $\mathcal{P}$ with sides being edges of the Farey graph, so $\lambda$-lengths of all sides of $\mathcal{P}$ are equal to $1$ (note that $AB$ and $CD$ are sides of $\mathcal{P}$). Therefore, the Farey triangulation restricted to $\mathcal{P}$ defines a Conway--Coxeter frieze containing the diamond with entries $a,b,c,d$.    

\end{proof}

\begin{figure}[!h]
\begin{center}
 \epsfig{file=./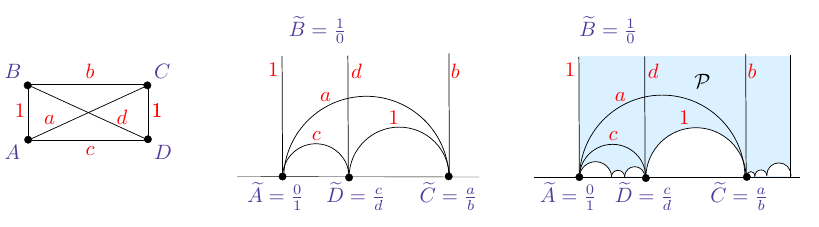,width=0.99\linewidth}
  \caption{Embedding the polygon $ABCD$ into an ideal polygon with sides contained in the Farey graph.}
\label{fig-abcd}
\end{center}
\end{figure}

It was noted to the authors by Aleksei Ustinov that the statement can be easily proved using continued fractions. 

The geometric proof can also be extended to certain unions of diamonds.


\begin{thebibliography}{MGOT15}

\bibitem[ARS10]{ARS} I.~Assem, C.~Reutenauer, D.~Smith, {\em Friezes}, Adv. Math. 225 (2010), 3134--3165.

\bibitem[BM09]{BM} K.~Baur, B.~R.~Marsh, {\em Frieze patterns for punctured discs}, with an appendix by H.~Thomas, J. Algebraic Combin. 30 (2009), 349--379.

\bibitem[CC06]{CaCh} P.~Caldero, F.~Chapoton, {\em Cluster algebras as Hall algebras of quiver representations}, Comment. Math. Helv. 81 (2006), 595--616.
  
\bibitem[CFGT22]{CFGT} \.{I}.~\c{C}anak\c{c}\i, A.~Felikson, A.~Garcia-Elsener, P.~Tumarkin, {\em  Friezes for a pair of pants}, S\'em. Lothar. Combin. 86B (2022), paper 32. 

\bibitem[CLS15]{CLS} \.{I}.~\c{C}anak\c{c}\i, K.~Lee, R.~Schiffler, {\em On cluster algebras from unpunctured surfaces with one marked point}, Proc. Amer. Math. Soc. Ser. B 2 (2015), 35--49.

  
\bibitem[CC73]{CC}
 J.~H.~Conway, H.~S.~M.~Coxeter, {\em Triangulated polygons and frieze patterns}, Math. Gaz. 57 (1973) 87--94, 175--183.

\bibitem[Cox71]{C} 
H.~S.~M. Coxeter, {\em Frieze patterns}, Acta Arith. 18 (1971), 297--310.

\bibitem[FKST23]{FKST}
 A.~Felikson, O.~Karpenkov, K.~Serhiyenko, P.~Tumarkin,  {\em $3D$ Farey graph, lambda lengths and $SL_2$-tilings},  Geom. Dedicata 219 (2025), article 33. 
   
\bibitem[FST08]{FST} S.~Fomin, M.~Shapiro, D.~Thurston, {\em Cluster algebras and triangulated surfaces. Part {\rm I}: Cluster complexes}, Acta Math. 201 (2008), 83--146.
  
\bibitem[FT18]{FT} S.~Fomin, D.~Thurston, {\em Cluster algebras and triangulated surfaces. Part {\rm II}: Lambda lengths}, Mem. Amer. Math. Soc. 255 (2018), no. 1223, v+97 pp.

  \bibitem[FZ02]{FZ} S.~Fomin, A.~Zelevinsky, {\em Cluster algebras {\rm I}: Foundations}, J. Amer. Math. Soc. 15 (2002), 497--529.

\bibitem[FP16]{FP} B.~Fontaine, P.-G.~Plamondon, {\em Counting friezes in type $D_n$},
J. Algebraic Combin. 44 (2016), 433--445.

\bibitem[GS20]{GS} E.~Gunawan, R.~Schiffler, {\em Frieze vectors and unitary friezes}, 
J. Comb. 11 (2020), 681--703.

\bibitem[MG12]{MG1}  S.~Morier-Genoud, {\em Arithmetics of 2-friezes}, J. Algebraic Combin. 36 (2012), 515--539.

\bibitem[MG15]{MG2}  
  S.~Morier-Genoud, {\em Coxeter’s frieze patterns at the crossroads of algebra, geometry and combinatorics}, Bull.
Lond. Math. Soc. 47 (2015), 895--938.

\bibitem[MGOT15]{MGOT}
S.~Morier-Genoud, V.~Ovsienko, S.~Tabachnikov,
{\em  $SL_2(\Z)$-tilings of the torus, Coxeter-Conway friezes 
and Farey triangulations},   Enseign. Math. 61 (2015), 71--92.


\bibitem[MSW13]{MSW}  G.~Musiker, R.~Schiffler, L.~Williams, {\em Bases for cluster algebras from surfaces}, Compos. Math. 149 (2013), 217--263.

\bibitem[MW13]{MW} G.~Musiker, L.~Williams, {\em Matrix formulae and skein relations for cluster algebras from surfaces}, Int. Math. Res. Not. IMRN 2013 (2013),  2891--2944.

  


\bibitem[Pen87]{P} R.~C.~Penner, {\it The decorated Teichm\"uller space of punctured surfaces},
Comm. Math. Phys. 113 (1987),  299--339.


\bibitem[Pr05]{Pr} J.~Propp, {\em The combinatorics of frieze patterns and Markoff numbers}, Integers 20 (2020), Paper A12, 38 pp.


  
\end{thebibliography}
\end{document}